\documentclass[bj,numbers,noshowframe]{imsart}

\RequirePackage{amsthm,amsmath,amsfonts,amssymb,mathtools}
\startlocaldefs
\theoremstyle{plain}
\newtheorem{theorem}{Theorem}
\newtheorem{lemma}{Lemma}
\newtheorem{corollary}{Corollary}
\newtheorem{proposition}{Proposition}
\DeclareMathOperator{\dCov}{dCov}
\DeclareMathOperator{\dCor}{dCor}
\endlocaldefs

\begin{document}

\begin{frontmatter}
\title{Bergsma--Dassios Sign Covariance Characterises Independence for
Arbitrary Real-Valued Bivariate Laws}
\runtitle{Sign Covariance Characterises Independence}

\begin{aug}
\author[A]{\inits{S.}\fnms{Stefan}~\snm{Gr\"unewald}
\ead[label=e1]{stefan@picb.ac.cn}}
\author[B]{\inits{L.}\fnms{Libo}~\snm{Huang}
\ead[label=e2]{917621923@qq.com}}
\runauthor{Gr\"unewald and Huang}
\address[A]{Shanghai Institute of Nutrition and Health, Chinese Academy of
Sciences, Shanghai, China\printead[presep={,\ }]{e1}}
\address[B]{Innovation School, Jiaxiang Education Group, Chengdu, Sichuan,
China\printead[presep={,\ }]{e2}}
\end{aug}

\begin{abstract}
Bergsma--Dassios sign covariance $\tau^*$ is a rank-based population
measure of dependence.  Building on zero-characterisation results under
specific regularity regimes, we prove that $\tau^*(X,Y)=0$ characterises
independence for every real-valued bivariate distribution, including mixed
and singular laws.  For the unnormalised four-sample convention for $\tau^*$
used here and the unscaled Blum--Kiefer--Rosenblatt functional $\mathcal B$,
the proof gives the
quantitative inequality $\tau^*\ge 2\mathcal B$.  This is a population
identification result; no new sample-level limit theorem is claimed.  The
argument first encodes finite ordered distributions with rational cell
probabilities by labelled path trees and applies a nonnegative
sum-of-squares representation for a quartet covariance.  Rational
approximation and nested quantisation then remove all support and regularity
restrictions.  On finite uniformly weighted label sets, the tree framework
also relates an edge-weighted quartet quantity to empirical distance
covariance squared.  As a separate combinatorial consequence, it yields the
asymptotic $2/3$ upper bound for the quartet distance between binary phylogenetic
trees.
\end{abstract}

\begin{keyword}[class=MSC]
\kwd{Primary 62H20}
\kwd{Secondary 62G10, 62H05, 05C05}
\end{keyword}

\begin{keyword}
\kwd{Bergsma--Dassios sign covariance}
\kwd{Blum--Kiefer--Rosenblatt discrepancy}
\kwd{distance covariance}
\kwd{nonparametric dependence measures}
\kwd{quartet covariance}
\kwd{singular distributions}
\end{keyword}
\end{frontmatter}

\section{Introduction}

Testing independence without imposing a parametric model is a central
problem of multivariate statistics.  Rank- and sign-based procedures are
particularly attractive because they are invariant under strictly increasing
transformations and can detect forms of dependence missed by
linear correlation.  Bergsma and Dassios~\cite{BergsmaDassios2014}
introduced the sign covariance $\tau^*$ as an extension of Kendall's $\tau$.
They proved that $\tau^*\ge0$, with equality if and only if the variables
are independent, when the joint law is discrete, jointly absolutely
continuous, or a mixture of these two types, and conjectured the same
conclusion for arbitrary bivariate laws.  These developments belong to a
longer line of pattern-based independence statistics.  Hoeffding's rank test
is based on patterns of five observations~\cite{Hoeffding1948}, whereas
Yanagimoto~\cite{Yanagimoto1970}, under the continuity assumptions of his
paper, presented a four-observation measure of association.  Drton, Han, and
Shi~\cite{DrtonHanShi2020} later identified that measure, for absolutely
continuous pairs, as proportional to the Bergsma--Dassios statistic; this
historical connection underlies later usage of the name
Bergsma--Dassios--Yanagimoto $\tau^*$.  They also established both
nonnegativity and the zero characterisation for random vectors with
continuous margins, allowing joint laws that need not be absolutely
continuous.  In the no-ties sample setting, and with their normalisation
conventions, they derived an exact identity linking the associated
$U$-statistics for Hoeffding's $D$, the Blum--Kiefer--Rosenblatt statistic
$R$, and $\tau^*$.  This sample-level identity is distinct from the population
inequality proved below.  The empirical statistic is a bounded
$U$-statistic: Nandy, Weihs, and Drton~\cite{NandyWeihsDrton2016} developed
its large-sample theory, while Weihs, Drton, and
Meinshausen~\cite{WeihsDrtonMeinshausen2018} placed it in the broader
framework of symmetric rank covariances.  More recently, Baringhaus and
Gr\"ubel~\cite{BaringhausGruebel2026} placed these and related procedures in
a general framework of pattern-based independence tests and studied their
limiting null distributions and local asymptotic relative efficiencies.
These historical and sample-level results are complementary to the
population identification result developed below.

\noindent\textbf{Main statistical result.}
Extending the cases covered by the preceding results,
Corollary~\ref{cor:tau-star} proves that $\tau^*(X,Y)>0$ whenever the
real-valued random variables $X$ and $Y$ are dependent, without excluding
atoms, mixtures, or singular components.  More precisely, for the
unnormalised four-sample convention for $\tau^*$ defined in
Subsection~\ref{subsec:tau-star} and the unscaled
Blum--Kiefer--Rosenblatt functional $\mathcal B$~\cite{BlumKieferRosenblatt1961}
defined in
\eqref{eq:BKR-discrepancy}, the proof establishes
\eqref{eq:tau-star-BKR-lower-bound}, namely $\tau^*(X,Y)\ge2\mathcal B(X,Y)$.
Dependence makes $\mathcal B$ positive and hence forces $\tau^*>0$.  In the
reverse direction, independence factors the expectation defining $\tau^*$,
and exchangeability makes each factor zero.  Thus $\tau^*=0$ characterises
independence for every real bivariate law.  No density, continuity, or moment
assumption is used.  The result concerns population identification; no new
sample-level limit theorem is proved here.

The proof begins with a finite combinatorial construction.  A finite ordered bivariate
distribution with rational cell probabilities is represented by two labelled
path trees.  On general
labelled trees we define quartet covariance, a signed comparison of the
four-sample partitions displayed by the two trees, and prove that it is
a sum of four families of nonnegative squares.  The coefficient lower
bound extracted from that representation yields the finite-support
inequality for $\tau^*$.  Rational approximation handles arbitrary
finite tables, while nested quantisation and dominated convergence pass
to arbitrary real laws.  The finite combinatorial identity and the
measure-theoretic limiting argument are established separately.
The extension to singular distributions uses bounded weak-order kernels,
right-closed nested quantisers, continuity from above, and dominated
convergence.

The tree representation also connects two statistical and combinatorial
notions that are usually studied separately.  On a finite uniformly weighted
label set, Theorem~\ref{thm:tree-distance-covariance} identifies the
edge-weighted quartet quantity with $3/2$ times empirical distance covariance
squared for the induced tree pseudometrics.  For binary phylogenetic trees on
$n$ taxa,
the sum-of-squares theorem also implies the asymptotic $2/3$ upper bound for
quartet distance conjectured by Bandelt and Dress~\cite{BandeltDress1986}.
Earlier work obtained the upper bound
$0.69\binom{n}{4}+o(n^4)$~\cite{AlonNavesSudakov2016}.  A 2026 preprint by
Pachter~\cite{Pachter2026} also proves the $2/3$ asymptotic result, by a
different argument based on common-root planarisation and a five-leaf
identity.  In the present paper, the quartet-distance bound is a separate
combinatorial corollary.  The input needed for the statistical result is the
coefficient estimate extracted from our sum-of-squares representation, not
the $2/3$ conclusion itself.

Section~\ref{sec:tree-framework} introduces labelled trees, partial quartets,
and quartet covariance.
Section~\ref{sec:sos} proves the inclusion--exclusion and sum-of-squares identities and,
most importantly for the statistical result, the coefficient estimate used
later.  The same section also records the quartet-distance corollary.
Section~\ref{sec:statistical-consequences} gives two related tree-dependence constructions and proves the
result for arbitrary real bivariate laws in
Subsection~\ref{subsec:tau-star}.

The proof of Corollary~\ref{cor:tau-star} uses the sum-of-squares identity and
its coefficient lemma only through their specialisation to path trees.  The
normalised tree indices, metric-distance identity, and phylogenetic extremal
bound are separate consequences and are not used in that proof.

\section{Tree and quartet covariance framework}
\label{sec:tree-framework}

We follow Semple and Steel~\cite{SempleSteel2003} for standard terminology and
notation concerning phylogenetic $X$-trees, splits, and displayed quartets.

For a vertex $v$ of a graph $G$, let $N(v)$ denote the set of neighbours
of $v$ and let $N_+(v)=N(v)\cup\{v\}$.  For $w\in N(v)$, define
$N_v(w):=N(w)\setminus\{v\}$ and $N_{v+}(w):=N_v(w)\cup\{w\}$.
Thus, for every ordered adjacent pair $(t,u)$ in $G$,
$N_{t+}(u)=N_t(u)\cup\{u\}$.  For such an ordered adjacent pair, define the
following two sets of ordered vertex pairs:
\[
 I_{t,u}:=\{(u',u'')\in N_{t+}(u)\times N_+(u):
 u'=u\text{ or }u''\notin\{t,u'\}\}
\]
and
\[
 I'_{t,u}:=\{(u',u'')\in N_{t+}(u)\times N_+(u):
 u'=u\text{ or }u''\ne u'\}.
\]

For a finite nonempty set $X$, an $X$-tree $\mathcal T=(T,\phi)$ is a finite
(graph-theoretic) tree $T=(V,E)$ together with a labelling map
$\phi:X\to V$, where $\phi^{-1}(v)\ne\varnothing$ for every vertex $v$
of degree at most $2$.  An $X$-tree is called \emph{trivial} if its underlying
tree has no edge, and \emph{non-trivial} otherwise.  A trivial $X$-tree
displays no partial quartet.  An $X$-tree is called \emph{phylogenetic} if
$\phi$ is a bijection from $X$ onto the set of graph-theoretic leaves of
$T$ (vertices of degree $1$); thus a phylogenetic
$X$-tree cannot have vertices of degree $2$.  A phylogenetic tree is
called \emph{binary} if all nonleaf vertices have degree $3$.

A \emph{partial split} of $X$ is an unordered pair of non-overlapping
non-empty subsets of $X$.  A partial split $\{X_1,X_2\}$, with
$X_1=\{x_1^1,\ldots,x_1^i\}$ and
$X_2=\{x_2^1,\ldots,x_2^j\}$, is denoted by $X_1\mid X_2$ or
$x_1^1\cdots x_1^i\mid x_2^1\cdots x_2^j$.  A partial split
$X_1\mid X_2$ is called a \emph{full split}, or simply a \emph{split},
if $X_1\cup X_2=X$; it is called a \emph{partial quartet} if
$|X_1|,|X_2|\le2$, and a \emph{(full) quartet} if
$|X_1|=|X_2|=2$.  Partial quartets $x_1\mid x_2$ and
$x_1\mid x_2x_3$ will sometimes be denoted by
$x_1x_1\mid x_2x_2$ and $x_1x_1\mid x_2x_3$, respectively.

For two partial splits $X_1\mid X_2$ and $X_3\mid X_4$ of $X$, we say
that $X_1\mid X_2$ \emph{displays} $X_3\mid X_4$ if either
$X_3\subseteq X_1$ and $X_4\subseteq X_2$, or $X_3\subseteq X_2$ and
$X_4\subseteq X_1$.  For an $X$-tree $\mathcal T=(T,\phi)$ and an edge
$uv$ of $T$, let $V^u(v)$ be the set of vertices in the same component
of $T-uv$ as $v$, and let
\[
 S_{uv}:=\bigcup_{v'\in V^u(v)}\phi^{-1}(v')
 \mathbin{\Big|}
 \bigcup_{u'\in V^v(u)}\phi^{-1}(u')
\]
be the corresponding split of $X$.  For $w\in N_u(v)$, let
\[
 S_{u,w}:=\bigcup_{u'\in V^v(u)}\phi^{-1}(u')
 \mathbin{\Big|}
\bigcup_{w'\in V^v(w)}\phi^{-1}(w')
\]
be the corresponding partial split of $X$.
Here $u-v-w$ is the unique length-$2$ path: the middle vertex $v$, its
labels, and all branches at $v$ other than the two endpoint branches are
omitted from the two displayed sides.

When $H$ is an edge $uv$, write $S_H:=S_{uv}$; when $H$ is the
length-$2$ path $u-v-w$, write $S_H:=S_{u,w}$.  Thus $S_e$ for an edge
$e$ always means its edge split, whereas $S_H$ for a length-$2$ path means
its endpoint partial split.  In later formulas whose surrounding indices
specify that $u,w$ are the distance-$2$ endpoints of $u-v-w$, the shorthand
$S_{uw}$ also denotes this endpoint partial split $S_{u,w}$; the
distance-$2$ condition distinguishes this usage from the edge-split notation.

Every component of a finite tree obtained by deleting an edge contains a
leaf of the original tree.  Such a leaf has a nonempty label fibre by the
definition of an $X$-tree.  Consequently both sides of every edge split are
nonempty.  Likewise, the union of the label fibres in the component reached
through any neighbouring branch is nonempty; the label fibre at the central
vertex itself may be empty.

A partial split $X_1\mid X_2$ is \emph{displayed} by $\mathcal T$ if
there is an edge $uv$ of $T$ such that $S_{uv}$ displays
$X_1\mid X_2$.  For $i\in\{2,3,4\}$, let $Q_i(\mathcal T)$ denote the
set of all displayed partial quartets containing precisely $i$ taxa.
Associate with a partial quartet $q=A\mid B$ a four-element multiset
$M(q)$ by listing each member of a two-element side once and, for every
singleton side, listing its unique taxon twice.  A quadruple
$(x_1,x_2,x_3,x_4)$ \emph{induces} $q$ if its multiset of entries is
$M(q)$.  Here the support of a quadruple is the set of its distinct entries.
Thus a support-$2$ quadruple induces a partial quartet precisely
for multiplicities $2+2$, and a support-$3$ quadruple does so precisely for
multiplicities $2+1+1$; in either case the induced object is unique.  A
support-$4$ quadruple induces each of the three possible full quartet
partitions on that support, while multiplicities $3+1$ or $4$ induce none.

Let $\mathcal T_1=(T_1,\phi_1)$ and
$\mathcal T_2=(T_2,\phi_2)$ be two $X$-trees with
$T_1=(V_1,E_1)$ and $T_2=(V_2,E_2)$.  For $i\in\{1,2\}$, let $D_i$
be the set of ordered pairs of adjacent vertices in $T_i$, let $E_i^2$
be the set of unordered pairs of vertices at distance $2$, and let
\[
 D_i^2:=\{(u,v,w)\in V_i^3:uv,vw\in E_i\text{ and }u\ne w\}
\]
be the set of ordered paths of length $2$ in $T_i$.  Fix an arbitrary total
order $<_i$ on $V_i$ and put
\[
 P_i:=\{(u,v,w)\in D_i^2:u<_i w\}.
\]
Thus $P_i$ contains exactly one representative of each reversal orbit
$\{(u,v,w),(w,v,u)\}$.  Reversing the endpoints leaves the endpoint partial
split unchanged.  A summation condition $uvw\in E_i^2$ is shorthand for
$\{u,w\}\in E_i^2$, with $v$ the unique middle vertex; it displays the middle
vertex without changing $E_i^2$ from a set of unordered endpoint pairs.

Let $(t,u)\in D_1$ and $(v,w)\in D_2$.  Throughout this paragraph,
neighbour and component notation involving $t,u$ is taken in $T_1$, while
that involving $v,w$ is taken in $T_2$.  Define
$R_{t,u}(u)=\phi_1^{-1}(u)$.  For $u'\in N(u)$, let $R_{t,u}(u')$
denote the union of the label sets $\phi_1^{-1}(u'')$ over all
$u''\in V_1^u(u')$.  Correspondingly, define
$C^{v,w}(w)=\phi_2^{-1}(w)$, and for $w'\in N(w)$ let
$C^{v,w}(w')$ be the union of $\phi_2^{-1}(w'')$ over all
$w''\in V_2^w(w')$.  For $u'\in N_+(u)$ and $w'\in N_+(w)$, put
\[
 A_{t,u}^{v,w}(u',w'):=R_{t,u}(u')\cap C^{v,w}(w').
\]
We call the sets $R_{t,u}(u')$ the row blocks, the sets $C^{v,w}(w')$
the column blocks, and their intersections $A_{t,u}^{v,w}(u',w')$ the
cells of the resulting row--column table.
For the block sets $R_{t,u}(u')$, $C^{v,w}(w')$, and
$A_{t,u}^{v,w}(u',w')$ just defined, and for their later indexed variants,
the corresponding lowercase letter denotes normalized cardinality, namely
cardinality divided by $|X|$; for example,
$r_{t,u}(u')=\frac{|R_{t,u}(u')|}{|X|}$.
For every $u'\in N(u)$ and $w'\in N(w)$, the sets $R_{t,u}(u')$ and
$C^{v,w}(w')$ are the nonempty branch-component label sets described above.
Consequently, $r_{t,u}(u')>0$ for $u'\in N(u)$, and
$c^{v,w}(w')>0$ for $w'\in N(w)$.
These are precisely the neighbouring-branch masses used as row sums or
denominators in the coefficient calculations below.  No positivity is
asserted for the possibly empty central fibres $R_{t,u}(u)$ and
$C^{v,w}(w)$.

The coefficient family denoted by lowercase $q$ below is unrelated to the
temporary letter $q$ used above for a generic partial quartet.  For
$(t,u)\in D_1$, $(v,w)\in D_2$, $u'\in N_{t+}(u)$,
$u''\in N_+(u)$, $w'\in N_{v+}(w)$, and $w''\in N_+(w)$, define
\[
q_{t,u}^{v,w}(u',u'',w',w'')
:=\frac12c^{v,w}(w'')\left(
 a_{t,u}^{v,w}(u',w')\frac{a_{t,u}^{v,w}(u'',v)}{c^{v,w}(v)}
+
 a_{t,u}^{v,w}(u'',w')\frac{a_{t,u}^{v,w}(u',v)}{c^{v,w}(v)}
\right).
\]
If $u'\in N_t(u)$, then
\[
q_{t,u}^{v,w}(u',t,w',w'')
=q_{u',u}^{v,w}(t,u',w',w''),
\]
and, retaining the two endpoints $t,u'$ of the path $t-u-u'$, we define
\[
b_{tu'}^{v,w}(w',w''):=q_{t,u}^{v,w}(u',t,w',w'').
\]
Correspondingly, $w'\in N_v(w)$ implies
\[
q_{t,u}^{v,w}(u',u'',w',v)
=q_{t,u}^{w',w}(u',u'',v,w'),
\]
and we define
\[
s_{t,u}^{vw'}(u',u''):=q_{t,u}^{v,w}(u',u'',w',v).
\]
If $u'\in N_t(u)$ and $w'\in N_v(w)$, define
$p_{tu'}^{vw'}:=q_{t,u}^{v,w}(u',t,w',v)$.
Thus each displayed $b$-, $s$-, or $p$-coefficient is obtained from the
displayed $q$-coefficient by imposing, respectively, $u''=t$, $w''=v$, or
both, at the stated indices and then using the displayed orientation
identity.  These are indexed coefficient identities, not identities between
complete coefficient families.  The lowercase coefficient $p$ is unrelated
to the path representative sets $P_i$.
Then
\begin{align*}
p_{tu'}^{vw'}
={}&q_{u',u}^{v,w}(t,u',w',v)
=q_{t,u}^{w',w}(u',t,v,w')
=q_{u',u}^{w',w}(t,u',v,w')\\
={}&b_{tu'}^{v,w}(w',v)
=b_{tu'}^{w',w}(v,w')
=s_{t,u}^{vw'}(u',t)
=s_{u',u}^{vw'}(t,u').
\end{align*}
The preceding identities show that $b$ and $s$ are unchanged by reversal of
the associated path endpoints, and that $p$ is unchanged by either reversal.
The corresponding squared linear forms in
Theorem~\ref{thm:sum-of-squares} are also unchanged.  Hence the sums over
$P_i$ do not depend on the auxiliary total order $<_i$.

Throughout this finite-label framework, drawing taxa with replacement means
independent uniform draws from $X$; consequently every ordered quadruple has
probability $|X|^{-4}$.  In the statistical application, a rational
probability table is represented by replicating labels so that these uniform
label frequencies equal the cell probabilities.

We define the \emph{quartet covariance} $Q(\mathcal T_1,\mathcal T_2)$
to be the probability of drawing four taxa with replacement that induce
a partial quartet displayed by both trees, minus half the probability
of drawing four taxa that induce one quartet displayed by $\mathcal T_1$
and a different quartet displayed by $\mathcal T_2$.

For two partial $X$-splits $S_1=A\mid B$ and $S_2=C\mid D$, define
$Q(S_1,S_2)$ analogously.  Then
\[
 Q(S_1,S_2)=\frac6{|X|^4}
 \left(\det\!\begin{pmatrix}
 |A\cap C|&|A\cap D|\\
 |B\cap C|&|B\cap D|
 \end{pmatrix}
 \right)^2.
\]
We call $S_1,S_2$ \emph{independent} if $Q(S_1,S_2)=0$, and call
$\mathcal T_1,\mathcal T_2$ independent if $S_{tu},S_{vw}$ are
independent for every $tu\in E_1$ and $vw\in E_2$.

For a four-element subset $Y\subseteq X$, an $X$-tree displays at most one
full quartet with support $Y$.  Define
\[
d_Q(\mathcal T_1,\mathcal T_2)
:=\#\{Y\subseteq X:|Y|=4,\ \mathcal T_1\text{ and }\mathcal T_2
\text{ display distinct full quartets on }Y\}.
\]
For binary phylogenetic trees every four-set supports exactly one displayed
quartet, so this definition agrees with the usual quartet
distance~\cite{SteelPenny1993}.

Finally, let
\[
X_{t,u}^{v,w}:=
a_{t,u}^{v,w}(t,v)a_{u,t}^{w,v}(u,w)
-a_{t,u}^{w,v}(t,w)a_{u,t}^{v,w}(u,v).
\]
Then
\[
X_{t,u}^{v,w}=-X_{t,u}^{w,v}=-X_{u,t}^{v,w}=X_{u,t}^{w,v},
\qquad
Q(S_{tu},S_{vw})=6\bigl(X_{t,u}^{v,w}\bigr)^2.
\]
By a \emph{pure-square term} we mean a term proportional to the square of a
single $X_{t,u}^{v,w}$ after one of the squared linear forms in
Theorem~\ref{thm:sum-of-squares} is expanded.  An individual pure-square
occurrence associated with one endpoint orientation need not be invariant.
Whenever a representative in $P_i$ is used, the formulas below include the
pure-square occurrences associated with both endpoint orientations.

\section{Sum-of-squares identity and coefficient bound}
\label{sec:sos}

Using the inclusion--exclusion principle, we can compute the quartet
covariance between two $X$-trees in terms of the splits and partial
splits corresponding to edges and paths of length $2$.

Theorem~\ref{thm:sum-of-squares} gives the representation used below.  For the
statistical application, we need Lemma~\ref{lem:coefficient-lower-bound},
which is obtained by retaining the first nonnegative sum and gives the local
quantitative bound later applied to path trees in the proof of
Corollary~\ref{cor:tau-star}.  The nonnegativity of the full identity also
gives the quartet-distance bound in Corollary~\ref{cor:two-thirds}.

\begin{lemma}\label{lem:display-path}
Let $q=A\mid B$ be a partial quartet displayed by an $X$-tree
$\mathcal T$.  The edges $e$ for which $S_e$ displays $q$ are the edges of
one nonempty path.  If that path has $k$ edges, then the length-$2$ paths
whose endpoint partial split displays $q$ are exactly its $k-1$ consecutive
edge pairs.
\end{lemma}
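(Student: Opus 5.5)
The idea is to pass from the label sets $A$ and $B$ to two vertex sets of $T$ and then to read off which edges and length-$2$ paths separate them. Put $U_A:=\phi(A)$ and $U_B:=\phi(B)$; each has at most two vertices. An edge $e$ displays $q$ exactly when $S_e$ puts all of $A$ on one side and all of $B$ on the other. Equivalently, $T-e$ separates $U_A$ from $U_B$, so in particular $U_A\cap U_B=\varnothing$. Let $T_A$ and $T_B$ be the minimal subtrees spanning $U_A$ and $U_B$; each is a single vertex or a path. Then $e$ displays $q$ iff $e$ lies in neither $T_A$ nor $T_B$ and the component of $T-e$ containing $T_A$ differs from the one containing $T_B$. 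Such an edge exists because $q$ is displayed, so $T_A$ and $T_B$ are vertex-disjoint.

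In a tree, two disjoint subtrees are joined by a unique shortest path $\pi$, running from a vertex $a\in T_A$ to a vertex $b\in T_B$, with interior avoiding both subtrees. I will show that the displaying edges are exactly the edges of $\pi$, and that $\pi$ is nonempty because $a\ne b$.

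First take an edge of $\pi$. Removing it disconnects $a$ from $b$. Any path from $T_A$ to $T_B$ in $T$ must contain $\pi$: concatenate it with paths inside $T_A$ and $T_B$ and use uniqueness of paths in a tree. Hence all of $T_A$ and all of $T_B$ lie on opposite sides.

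Conversely, take an edge $e$ not on $\pi$. Either $e$ lies in $T_A$ or $T_B$, in which case it splits two labels on the same side of $q$, or it lies off $T_A\cup\pi\cup T_B$. In the latter case $T_A\cup\pi\cup T_B$ is connected and avoids $e$, so it sits in a single component of $T-e$. In both cases $S_e$ fails to display $q$. Singleton sides are covered, since then $T_A$ is just a vertex.

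Next, the length-$2$ paths. Let $u-v-w$ be such a path, and recall that $S_{u,w}$ consists of the labels in $V^v(u)$ versus those in $V^v(w)$; the labels at $v$ and in the other branches are dropped. Suppose $S_{u,w}$ displays $q$, with $A$ on the $u$ side. Then $T_A$ lies in the branch $V^v(u)$ and $T_B$ in $V^v(w)$. The path between these two branches runs through $u$, $v$ and $w$, so $\pi$ contains both edges $uv$ and $vw$, which are consecutive edges of $\pi$. Conversely, if $uv$ and $vw$ are consecutive edges of $\pi$, then $T_A\cup(\text{part of }\pi\text{ before }u)$ is connected and avoids $v$. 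It therefore lies in $V^v(u)$, and symmetrically $T_B$ lies in $V^v(w)$. So $S_{u,w}$ displays $q$. A path with $k$ edges has exactly $k-1$ consecutive edge pairs.

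The main care point is bookkeeping rather than depth. I have to be precise that "displays" allows either orientation of the sides, and that the central fibre $\phi^{-1}(v)$ and the other branches at $v$ are omitted from $S_{u,w}$. That omission is exactly why the middle vertex of $\pi$ may carry labels outside $q$ without any harm.
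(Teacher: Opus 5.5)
Your proof is correct and takes the same route as the paper. You pass to the minimal subtrees spanning $\phi(A)$ and $\phi(B)$, show they are disjoint, identify the displaying edges as exactly those on the unique path joining the two subtrees, and match displaying length-$2$ paths $u-v-w$ with consecutive edge pairs of that path; where the paper states each of these steps in a single line, you supply the details, such as why edges off $\pi$ fail to display $q$ and why consecutive edges of $\pi$ give a displaying $S_{u,w}$.
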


\begin{proof}
Let $H_A$ and $H_B$ be the minimal connected subtrees spanning the labelled
vertices of the taxa in $A$ and $B$, respectively.  Since some edge split
displays $A\mid B$, these two subtrees are disjoint.  An edge separates all
of $A$ from all of $B$ exactly when it lies on the unique path joining
$H_A$ to $H_B$.  Two such separating edges are adjacent exactly when their
outer endpoint branches define one of the partial splits $S_{u,w}$ above.
The last assertion is therefore the elementary count of consecutive edge
pairs in a path.
\end{proof}

\begin{theorem}\label{thm:inclusion-exclusion}
For every two $X$-trees $\mathcal T_1$ and $\mathcal T_2$, we have
\begin{align*}
Q(\mathcal T_1,\mathcal T_2)
={}&\sum_{\substack{tu\in E_1\\vw\in E_2}}Q(S_{tu},S_{vw})
-\sum_{\substack{tu\in E_1\\v_1wv_2\in E_2^2}}Q(S_{tu},S_{v_1v_2})\\
&-\sum_{\substack{t_1ut_2\in E_1^2\\vw\in E_2}}Q(S_{t_1t_2},S_{vw})
+\sum_{\substack{t_1ut_2\in E_1^2\\v_1wv_2\in E_2^2}}
Q(S_{t_1t_2},S_{v_1v_2}).
\end{align*}
\end{theorem}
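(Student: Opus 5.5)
The plan is to expand every term on both sides as a sum over ordered quadruples $(x_1,x_2,x_3,x_4)\in X^4$, each with weight $|X|^{-4}$, and to compare contributions quadruple by quadruple. For partial splits $S_1,S_2$, the quantity $Q(S_1,S_2)$ is by definition the probability that a quadruple induces a partial quartet displayed by both $S_1$ and $S_2$, minus half the probability that it induces a quartet displayed by $S_1$ and a different quartet displayed by $S_2$. So for a fixed quadruple $\xi$ and a fixed partial quartet $q$ induced by $\xi$, the contribution of $\xi$ to $Q(S_1,S_2)$ is built from indicators $[S_1\text{ displays }q]\,[S_2\text{ displays }q']$, with weight $+1$ when $q=q'$ and $-\tfrac12$ when $q\neq q'$ are distinct full quartets on a support-$4$ quadruple. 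The right-hand side therefore factorises: for each ordered pair $(q,q')$ of partial quartets induced by $\xi$, the coefficient of $[q\text{ displayed}]\,[q'\text{ displayed}]$ is
\[
\Bigl(\sum_{e\in E_1}[S_e\text{ displays }q]-\sum_{H\in E_1^2}[S_H\text{ displays }q]\Bigr)
\Bigl(\sum_{f\in E_2}[S_f\text{ displays }q']-\sum_{K\in E_2^2}[S_K\text{ displays }q']\Bigr).
\]

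The key step is to evaluate each factor with Lemma~\ref{lem:display-path}. If $q$ is displayed by $\mathcal T_1$, the displaying edges form a nonempty path with $k\ge1$ edges, and the displaying length-$2$ paths are its $k-1$ consecutive pairs, so the first factor equals $k-(k-1)=1$. If $q$ is not displayed, then no edge split displays $q$. No length-$2$ endpoint split does either, because any edge on the path between the two endpoints would separate the same sides and so display $q$. The factor is then $0$. Hence each factor is exactly the indicator that $\mathcal T_i$ displays $q$ (resp.\ $q'$). Two points need explicit checking. First, $S_{u,w}$ displaying $q$ forces both edges $uv$ and $vw$ to display it, since each side of $S_{u,w}$ is contained in the corresponding side of either edge split. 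Second, the sum over unordered $E_i^2$ counts each length-$2$ path once, consistent with the lemma's count.

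Summing over $\xi$ and over the induced pairs $(q,q')$ with weights $+1$ or $-\tfrac12$ then reproduces exactly the definition of $Q(\mathcal T_1,\mathcal T_2)$. The definition counts quadruples inducing a partial quartet displayed by both trees, and for support-$4$ quadruples the pairs of distinct full quartets displayed by $\mathcal T_1$ and $\mathcal T_2$ respectively. Here I use that for a fixed support each tree displays at most one full quartet, so no multiplicity issues arise; the product of indicators records precisely these events. Trivial trees have empty edge sets, which gives $0$ on both sides.

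I expect the main obstacle to be the bookkeeping that justifies the interchange: confirming that the definition of $Q(S_1,S_2)$ for partial splits uses the same quadruple/induced-quartet weighting as $Q(\mathcal T_1,\mathcal T_2)$, so that everything is bilinear in the display indicators. The delicate cases are the degenerate ones, where multiplicities $2+2$ or $2+1+1$ induce a unique $q$, so only $q=q'$ terms arise. The second delicate point is the converse claim that a non-displayed $q$ is displayed by no endpoint split $S_{u,w}$; I will argue it directly from the containment of the sides of $S_{u,w}$ in the sides of $S_{uv}$.
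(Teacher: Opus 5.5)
Your proposal is correct and is essentially the paper's argument. You compare contributions quadruple by quadruple and use Lemma~\ref{lem:display-path} to get $k_i$ displaying edges and $k_i-1$ displaying length-$2$ paths, so the inclusion--exclusion coefficient $k_1k_2-k_1(k_2-1)-(k_1-1)k_2+(k_1-1)(k_2-1)$ equals $1$; this is just your factorised $(k_1-(k_1-1))(k_2-(k_2-1))$. Your explicit checks fill in details the paper leaves implicit: that a non-displayed quartet is displayed by no endpoint split (since the sides of $S_{u,w}$ lie in those of $S_{uv}$), and that each split or tree displays at most one full quartet per four-set, which makes every contribution bilinear in display indicators.
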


\begin{proof}
By Lemma~\ref{lem:display-path}, for every displayed partial quartet $q_i$
the number of displaying length-$2$ paths is one less than the number of
displaying edges.

Let $(x_1,x_2,x_3,x_4)$ induce a partial quartet $q_i$ displayed by
$\mathcal T_i$, for $i\in\{1,2\}$.  Let $H_i$ be an edge or a path of
length $2$ of $T_i$ such that $S_{H_i}$ displays $q_i$.  The contribution
of the quadruple to both $Q(\mathcal T_1,\mathcal T_2)$ and
$Q(S_{H_1},S_{H_2})$ is $|X|^{-4}$ if $q_1=q_2$, and
$-(2|X|^4)^{-1}$ otherwise.  Denote this contribution by
$\gamma(x_1,x_2,x_3,x_4)$, and let $k_i$ be the number of edges $u_iv_i$
of $T_i$ for which $S_{u_iv_i}$ displays $q_i$.  The contribution to the
right-hand side is
\[
\gamma(x_1,x_2,x_3,x_4)
\bigl(k_1k_2-k_1(k_2-1)-(k_1-1)k_2+(k_1-1)(k_2-1)\bigr)
=\gamma(x_1,x_2,x_3,x_4).
\]
Summing over all ordered quadruples proves the formula.
\end{proof}

We will show that the quartet covariance is a sum of squares.

\begin{theorem}\label{thm:sum-of-squares}
For every two $X$-trees $\mathcal T_1$ and $\mathcal T_2$,
\begin{align*}
\frac16Q(\mathcal T_1,\mathcal T_2)
={}&\sum_{\substack{(t,u)\in D_1\\(v,w)\in D_2}}
\sum_{\substack{(u',u'')\in I_{t,u}\\(w',w'')\in I_{v,w}}}
q_{t,u}^{v,w}(u',u'',w',w'')\bigl(X_{t,u}^{v,w}\bigr)^2\\
&+\sum_{\substack{(t,u)\in D_1\\(v_1,w,v_2)\in P_2}}
\sum_{(u',u'')\in I_{t,u}}
s_{t,u}^{v_1v_2}(u',u'')
\bigl(X_{t,u}^{v_1,w}+X_{t,u}^{v_2,w}\bigr)^2\\
&+\sum_{\substack{(t_1,u,t_2)\in P_1\\(v,w)\in D_2}}
\sum_{(w',w'')\in I_{v,w}}
b_{t_1t_2}^{v,w}(w',w'')
\bigl(X_{t_1,u}^{v,w}+X_{t_2,u}^{v,w}\bigr)^2\\
&+\sum_{\substack{(t_1,u,t_2)\in P_1\\(v_1,w,v_2)\in P_2}}
p_{t_1t_2}^{v_1v_2}
\bigl(X_{t_1,u}^{v_1,w}+X_{t_1,u}^{v_2,w}
+X_{t_2,u}^{v_1,w}+X_{t_2,u}^{v_2,w}\bigr)^2.
\end{align*}
\end{theorem}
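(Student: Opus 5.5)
Starting from Theorem~\ref{thm:inclusion-exclusion}, the plan is to write $Q(\mathcal T_1,\mathcal T_2)$ as the alternating sum of split-versus-split covariances over edges and length-$2$ paths. Each term $Q(S_1,S_2)$ is $6/|X|^4$ times the square of a $2\times2$ determinant, which is a quadratic form in the cell counts. I will express every determinant in terms of the local quantities $X_{t,u}^{v,w}$. For an edge $tu$ and a length-$2$ path $v_1-w-v_2$, the partial split $S_{v_1v_2}$ omits the central fibre $C(w)$ and the other branches at $w$. The corresponding determinant can therefore be rewritten as a linear combination of $X_{t,u}^{v_1,w}$ and $X_{t,u}^{v_2,w}$, with coefficients given by column masses. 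The analogous statement should hold on the $T_1$ side. The guiding observation is that the determinant for $(S_{t_1t_2},S_{v_1v_2})$ is essentially a mass-weighted combination of the four $X$'s appearing in the last family of Theorem~\ref{thm:sum-of-squares}.

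The main work is then coefficient bookkeeping. I will expand each edge--edge square $(X_{t,u}^{v,w})^2$ of Theorem~\ref{thm:inclusion-exclusion} using the row--column table at $(t,u),(v,w)$. Concretely, I will use the decomposition of $1$ into row masses $r_{t,u}(u')$, $u'\in N_+(u)$, and column masses $c^{v,w}(w')$, which sum to $1$. I will also use the identity $a(u',w')a(u'',v)/c(v)$, which is what motivates the definition of $q$. Next I will multiply out the squared linear forms on the right-hand side. This produces pure-square terms and cross terms $X_{t,u}^{v_1,w}X_{t,u}^{v_2,w}$, and the task is to match them against the subtracted path terms of Theorem~\ref{thm:inclusion-exclusion}. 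The index sets $I_{t,u}$ and $I'_{t,u}$ differ precisely in the pairs with $u''\in\{t,u'\}$. These excluded pairs are the ones absorbed into the $b$-, $s$- and $p$-families via the orientation identities $q_{t,u}^{v,w}(u',t,\cdot,\cdot)=q_{u',u}^{v,w}(t,u',\cdot,\cdot)$. This suggests the following organisation. First prove a one-sided identity, with $\mathcal T_2$ collapsed to a single edge, expressing $\sum_{tu}Q(S_{tu},S)-\sum_{t_1ut_2}Q(S_{t_1t_2},S)$ as the first and third families. Then tensor this with the same identity on the $T_2$ side. Because all coefficients factor as products of a row part and a column part, the two-sided identity should follow from the one-sided one applied bilinearly.

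I expect the main obstacle to be the one-sided local identity at a single vertex $u$ of $T_1$. It must show that the edge terms on the edges incident to $u$, minus the path terms through $u$, equal the weighted squares, with weights summing correctly over $N_{t+}(u)\times N_+(u)$. I would first verify it for a star with central fibre, where $X_{t,u}$ reduces to a determinant of row/column sums. I would check it by direct expansion in the cell counts $a(u',w')$, using that the row masses sum to $1$. Throughout, each pure square must be counted for both orientations of a $P_i$ representative, as the text preceding the theorem stipulates. Once the star case is established, summing over vertices $u$ assembles the global identity, because every edge and every length-$2$ path is attached to exactly one or two central vertices in a way consistent with Lemma~\ref{lem:display-path}.
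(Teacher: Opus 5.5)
\textbf{There is a genuine gap.} Your starting point matches the paper: Theorem~\ref{thm:inclusion-exclusion}, then rewriting each squared determinant in the $X$-variables. Your edge--path formula with column-mass weights is exactly \eqref{eq:edge-path-square}.

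The gap is in the step meant to do the real work, namely tensoring a one-sided identity with its $T_2$-analogue ``because all coefficients factor as products of a row part and a column part.'' They do not factor.
\begin{itemize}
\item The coefficient $q_{t,u}^{v,w}(u',u'',w',w'')$ is built from joint cells $a_{t,u}^{v,w}(u',w')$ of the row--column table and from column-conditional ratios $a_{t,u}^{v,w}(u'',v)/c^{v,w}(v)$. It is therefore neither factorised nor symmetric in the two trees.
\item The coefficient $p_{t_1t_2}^{v_1v_2}$ works out to $\frac12\bigl(a(t_1,v_1)a(t_2,v_2)+a(t_1,v_2)a(t_2,v_1)\bigr)$. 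Here $a(t_i,v_j)$ is the normalised mass of the $t_i$-branch at $u$ meeting the $v_j$-branch at $w$. So $p$ is half the permanent of the joint $2\times2$ branch table at $(u,w)$.
\item Even your one-sided identity (the case where $T_2$ is a single edge $C\mid D$) has coefficients depending jointly on $T_1$ and on $C\mid D$. There is no bilinear structure to tensor.
\item The path--path terms $Q(S_{t_1t_2},S_{v_1v_2})$ only arise in the genuinely two-sided setting, and no product of one-sided identities produces them.
\item The vertex-by-vertex assembly has the same problem. Each edge--edge term $(X_{t,u}^{v,w})^2$ must be distributed over its four orientation pairs. In the paper this is the joint identity expressing $1$ as the sum of the $q$-coefficients over the four orientation blocks. ``Weights summing correctly'' does not supply it.
\end{itemize}

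\textbf{The missing idea.} The $X$-variables satisfy data-dependent linear relations. Hence each squared determinant has many numerically equal quadratic representations, and only a specific choice matches the theorem term by term.

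For example, your ``analogous statement on the $T_1$ side'' is true. Lemma~\ref{lem:matrix-identity} with $\alpha=c^{v,w}(v)$ writes the path--edge determinant as $r_{t_2,u}(t_1)X_{t_2,u}^{v,w}-r_{t_1,u}(t_2)X_{t_1,u}^{v,w}$. But squaring it gives the cross coefficient $-2r_{t_2,u}(t_1)r_{t_1,u}(t_2)$. The $b$-coefficients in \eqref{eq:path-edge} instead add up to the cross coefficient $-\bigl(a(t_1,v)a(t_2,w)+a(t_2,v)a(t_1,w)\bigr)/\bigl(c(v)c(w)\bigr)$, and the two differ in general.

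The paper obtains the correct forms as follows.
\begin{itemize}
\item It writes the path--edge and path--path determinants in two ways, using Lemma~\ref{lem:matrix-identity} with $\alpha=1$ and with $\alpha=0$, and multiplies the two different linear expressions.
\item It averages the path--edge form over $(v,w)$ and $(w,v)$ with weights $c^{v,w}(v)$ and $c^{w,v}(w)$.
\item It splits the scalars $1$, $c^{v_1,w}(v_2)^2$ and $c^{v_1,w}(v_2)c^{v_2,w}(v_1)$ into $q$- and $s$-sums.
\item It collects terms via $\mathbf 1_{K\times K}-\mathbf 1_{K\times\Delta}-\mathbf 1_{\Delta\times K}+\mathbf 1_{\Delta\times\Delta}=\mathbf 1_{I'\times I'}$, then uses $I'=I\mathbin{\dot\cup}B$ and the boundary identities.
\end{itemize}

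Separately, your description of the index sets is inaccurate. In fact $I'_{t,u}\setminus I_{t,u}=B_{t,u}=\{(u',t):u'\in N_t(u)\}$. The diagonal pairs $(u',u')$ lie in neither set; they cancel against the pure-square parts of the subtracted path terms. Only $B$ is absorbed into the $b$-, $s$- and $p$-families.
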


To prove Theorem~\ref{thm:sum-of-squares}, we express the terms involving
one or two partial splits in terms of the $X$-variables.  We use the
following elementary linear-algebra identity.

\begin{lemma}\label{lem:matrix-identity}
Let
\[
A=\begin{pmatrix}a_1^1&a_1^2&a_1^3\\a_2^1&a_2^2&a_2^3\end{pmatrix}
\]
be a real $2\times3$ matrix.  For $i,j\in\{1,2,3\}$, put
\[
A^{i,j}:=\begin{pmatrix}a_1^i&a_1^j\\a_2^i&a_2^j\end{pmatrix},
\quad
A^{1,2+3}:=\begin{pmatrix}a_1^1&a_1^2+a_1^3\\
a_2^1&a_2^2+a_2^3\end{pmatrix},
\]
and
\[
A^{3,1+2}:=\begin{pmatrix}a_1^3&a_1^2+a_1^1\\
a_2^3&a_2^2+a_2^1\end{pmatrix}.
\]
Let $r_1=a_1^1+a_1^2+a_1^3$ and
$r_2=a_2^1+a_2^2+a_2^3$, and assume $r_1,r_2\ne0$.  For every
$\alpha\in\mathbb R$,
\[
\det(A^{1,3})=
\left(\alpha\frac{a_1^3}{r_1}+(1-\alpha)\frac{a_2^3}{r_2}\right)
\det(A^{1,2+3})
-\left(\alpha\frac{a_1^1}{r_1}+(1-\alpha)\frac{a_2^1}{r_2}\right)
\det(A^{3,1+2}).
\]
\end{lemma}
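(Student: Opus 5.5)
The plan is to reduce the claimed identity to the vanishing of a $3\times3$ determinant with a repeated row, using only the bilinearity of the $2\times2$ determinant in its columns. Write $D_{ij}:=\det(A^{i,j})$ for $i,j\in\{1,2,3\}$, so that $D_{ji}=-D_{ij}$. Also put
\[
c_i:=\alpha\frac{a_1^i}{r_1}+(1-\alpha)\frac{a_2^i}{r_2},\qquad i\in\{1,2,3\}.
\]

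First I expand the two composite determinants by linearity in the second column. This gives $\det(A^{1,2+3})=D_{12}+D_{13}$. It also gives $\det(A^{3,1+2})=D_{32}+D_{31}=-D_{23}-D_{13}$. The right-hand side of the lemma therefore equals
\[
c_3(D_{12}+D_{13})+c_1(D_{23}+D_{13})=(c_1+c_3)D_{13}+c_3D_{12}+c_1D_{23}.
\]
Next I use the normalisation. By definition of $r_1,r_2$ we have $c_1+c_2+c_3=\alpha\cdot 1+(1-\alpha)\cdot1=1$, so $1-c_1-c_3=c_2$. This is the only place where $r_1,r_2\neq0$ enters. The claimed identity is thus equivalent to
\[
c_1D_{23}-c_2D_{13}+c_3D_{12}=0 .
\]

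The left-hand side of this last equation is affine in $\alpha$. It therefore suffices to verify it at $\alpha=1$ and at $\alpha=0$. At $\alpha=1$, after multiplying by $r_1$, the expression becomes $a_1^1D_{23}-a_1^2D_{13}+a_1^3D_{12}$. This is the Laplace expansion along the first row of the $3\times3$ matrix with rows $(a_1^1,a_1^2,a_1^3)$, $(a_1^1,a_1^2,a_1^3)$, $(a_2^1,a_2^2,a_2^3)$, and that determinant vanishes because two rows coincide. The case $\alpha=0$ is identical with the second row of $A$ repeated instead.

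There is no real obstacle in this argument. The only points requiring care are the sign bookkeeping in $\det(A^{3,1+2})$, where the column order is reversed relative to $D_{13}$ and $D_{23}$, and noticing that $1-c_1-c_3=c_2$, which is what turns the identity into a cofactor expansion.
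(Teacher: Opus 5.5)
Your proposal is correct and follows essentially the paper's argument: the cofactor expansion of the $3\times3$ determinant with a repeated row of $A$, the bilinear expansions $\det(A^{1,2+3})=D_{12}+D_{13}$ and $\det(A^{3,1+2})=-D_{13}-D_{23}$, the cases $\alpha=1$ and $\alpha=0$, and linearity in $\alpha$. The only difference is cosmetic: you first use $c_1+c_2+c_3=1$ to reduce the target to the single identity $c_1D_{23}-c_2D_{13}+c_3D_{12}=0$, whereas the paper adds $(a_1^1+a_1^3)\det(A^{1,3})$ to the expansion and divides by $r_1$.
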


\begin{proof}
Append a copy of the first row of $A$ above $A$ to form a $3\times3$
matrix $A_+$.  Its first two rows coincide, so $\det(A_+)=0$.  Expanding
along the first row gives
\[
0=a_1^1\det(A^{2,3})-a_1^2\det(A^{1,3})+a_1^3\det(A^{1,2}),
\]
and hence
\[
a_1^2\det(A^{1,3})=a_1^1\det(A^{2,3})+a_1^3\det(A^{1,2}).
\]
Adding $(a_1^1+a_1^3)\det(A^{1,3})$ and dividing by $r_1$ proves the
formula for $\alpha=1$, because
$\det(A^{1,2+3})=\det(A^{1,2})+\det(A^{1,3})$ and
$\det(A^{3,1+2})=-\det(A^{1,3})-\det(A^{2,3})$.  Appending a copy of the second
row instead proves the case $\alpha=0$.  The general identity is the
linear combination of these two cases with coefficients $\alpha$ and
$1-\alpha$.
\end{proof}

\begin{proof}[Proof of Theorem~\ref{thm:sum-of-squares}]
We express the four contributions in Theorem~\ref{thm:inclusion-exclusion}
as quadratic functions of at most four variables $X_{t,u}^{v,w}$.
For any $h\in\{1,2\}$, write $\bar h:=3-h$ for the complementary index.
For edges $tu$ of $T_1$ and $vw$ of $T_2$,
\[
\frac16Q(S_{tu},S_{vw})=(X_{t,u}^{v,w})^2,
\]
and this remains true after interchanging $t,u$ and/or $v,w$.  Directly
from the definition of the coefficient family $q$ and the row and column
partitions,
\begin{align*}
1={}&
\sum_{\substack{u'\in N_{t+}(u),\ u''\in N_+(u)\\
w'\in N_{v+}(w),\ w''\in N_+(w)}}
q_{t,u}^{v,w}(u',u'',w',w'')
+\sum_{\substack{u'\in N_{t+}(u),\ u''\in N_+(u)\\
v'\in N_{w+}(v),\ v''\in N_+(v)}}
q_{t,u}^{w,v}(u',u'',v',v'')\\
&+\sum_{\substack{t'\in N_{u+}(t),\ t''\in N_+(t)\\
w'\in N_{v+}(w),\ w''\in N_+(w)}}
q_{u,t}^{v,w}(t',t'',w',w'')
+\sum_{\substack{t'\in N_{u+}(t),\ t''\in N_+(t)\\
v'\in N_{w+}(v),\ v''\in N_+(v)}}
q_{u,t}^{w,v}(t',t'',v',v'').
\end{align*}
Associating each coefficient with the $X$-variable having the same outer
directed-edge indices $(t,u)$ and $(v,w)$ gives
\begin{align}
\frac16\sum_{\substack{tu\in E_1\\vw\in E_2}}Q(S_{tu},S_{vw})
={}&\sum_{\substack{(t,u)\in D_1\\(v,w)\in D_2}}
\sum_{\substack{u'\in N_{t+}(u),\ u''\in N_+(u)\\
w'\in N_{v+}(w),\ w''\in N_+(w)}}
q_{t,u}^{v,w}(u',u'',w',w'')
(X_{t,u}^{v,w})^2.                                      \label{eq:edge-edge}
\end{align}

For an edge $tu$ of $T_1$ and a path $v_1wv_2$ of length $2$ in $T_2$,
\[
\frac16Q(S_{tu},S_{v_1v_2})=
\left|
\begin{matrix}
a_{t,u}^{v_1,w}(t,v_2)&a_{t,u}^{v_2,w}(t,v_1)\\
a_{u,t}^{v_1,w}(u,v_2)&a_{u,t}^{v_2,w}(u,v_1)
\end{matrix}
\right|^2.
\]
Apply Lemma~\ref{lem:matrix-identity}, with $\alpha=r_{t,u}(t)$, to
\begingroup
\small
\setlength{\arraycolsep}{12pt}
\renewcommand{\arraystretch}{1.12}
\[
\begin{pmatrix}
a_{t,u}^{v_1,w}(t,v_2)&
r_{t,u}(t)-a_{t,u}^{v_1,w}(t,v_2)-a_{t,u}^{v_2,w}(t,v_1)&
a_{t,u}^{v_2,w}(t,v_1)\\
a_{u,t}^{v_1,w}(u,v_2)&
r_{u,t}(u)-a_{u,t}^{v_1,w}(u,v_2)-a_{u,t}^{v_2,w}(u,v_1)&
a_{u,t}^{v_2,w}(u,v_1)
\end{pmatrix}.
\]
\endgroup
The two row sums in this application are the complementary positive branch
masses $r_{t,u}(t)$ and $r_{u,t}(u)=1-r_{t,u}(t)$.  Identifying the resulting
$2\times2$ determinants with the corresponding $X$-variables gives
\begin{equation}
\frac16Q(S_{tu},S_{v_1v_2})
=\bigl(c^{v_1,w}(v_2)X_{t,u}^{v_1,w}
-c^{v_2,w}(v_1)X_{t,u}^{v_2,w}\bigr)^2.          \label{eq:edge-path-square}
\end{equation}
Moreover,
\begin{align*}
c^{v_1,w}(v_2)^2
={}&\sum_{\substack{u'\in N_{t+}(u)\\u''\in N_+(u)}}
q_{t,u}^{v_1,w}(u',u'',v_2,v_2)
+\sum_{\substack{t'\in N_{u+}(t)\\t''\in N_+(t)}}
q_{u,t}^{v_1,w}(t',t'',v_2,v_2),
\end{align*}
and the corresponding identity holds with $v_1$ and $v_2$ interchanged.
The mixed coefficient satisfies
\begin{align*}
c^{v_1,w}(v_2)c^{v_2,w}(v_1)
={}&\sum_{\substack{u'\in N_{t+}(u)\\u''\in N_+(u)}}
s_{t,u}^{v_1v_2}(u',u'')
+\sum_{\substack{t'\in N_{u+}(t)\\t''\in N_+(t)}}
s_{u,t}^{v_1v_2}(t',t'').
\end{align*}
Expanding \eqref{eq:edge-path-square} gives two pure-square terms and one mixed
term.  The identities above give both pure-square coefficients, whereas $P_2$
contains only one orientation of each path: one pure-square term corresponds
to the chosen orientation and the other to its reversal.  Summing over all
edges and path orbits therefore gives
\begin{align}
\frac16\sum_{\substack{tu\in E_1\\v_1wv_2\in E_2^2}}
Q(S_{tu},S_{v_1v_2})
&=\sum_{\substack{(t,u)\in D_1\\(v_1,w,v_2)\in P_2}}
\sum_{j\in\{1,2\}}
\sum_{\substack{u'\in N_{t+}(u)\\u''\in N_+(u)}}
q_{t,u}^{v_j,w}(u',u'',v_{\bar j},v_{\bar j})
(X_{t,u}^{v_j,w})^2                                        \notag\\
&-2\sum_{\substack{(t,u)\in D_1\\(v_1,w,v_2)\in P_2}}
\sum_{\substack{u'\in N_{t+}(u)\\u''\in N_+(u)}}
s_{t,u}^{v_1v_2}(u',u'')X_{t,u}^{v_1,w}X_{t,u}^{v_2,w}.
                                                               \label{eq:edge-path}
\end{align}

For a path $t_1ut_2$ of length $2$ in $T_1$ and an edge $vw$ in $T_2$,
\[
\frac16Q(S_{t_1t_2},S_{vw})=
\left|
\begin{matrix}
a_{t_1,u}^{v,w}(t_2,v)&a_{t_1,u}^{w,v}(t_2,w)\\
a_{t_2,u}^{v,w}(t_1,v)&a_{t_2,u}^{w,v}(t_1,w)
\end{matrix}
\right|^2.
\]
Apply Lemma~\ref{lem:matrix-identity} to
\begingroup
\small
\setlength{\arraycolsep}{12pt}
\renewcommand{\arraystretch}{1.12}
\[
\begin{pmatrix}
a_{t_1,u}^{v,w}(t_2,v)&
c^{v,w}(v)-a_{t_1,u}^{v,w}(t_2,v)-a_{t_2,u}^{v,w}(t_1,v)&
a_{t_2,u}^{v,w}(t_1,v)\\
a_{t_1,u}^{w,v}(t_2,w)&
c^{w,v}(w)-a_{t_1,u}^{w,v}(t_2,w)-a_{t_2,u}^{w,v}(t_1,w)&
a_{t_2,u}^{w,v}(t_1,w)
\end{pmatrix}.
\]
\endgroup
For $\alpha=1$ this yields
\begin{align*}
\left|
\begin{matrix}
a_{t_1,u}^{v,w}(t_2,v)&a_{t_2,u}^{v,w}(t_1,v)\\
a_{t_1,u}^{w,v}(t_2,w)&a_{t_2,u}^{w,v}(t_1,w)
\end{matrix}
\right|
={}&\frac{a_{t_2,u}^{v,w}(t_1,v)}{c^{v,w}(v)}X_{t_2,u}^{v,w}
-\frac{a_{t_1,u}^{v,w}(t_2,v)}{c^{v,w}(v)}X_{t_1,u}^{v,w},
\end{align*}
whereas $\alpha=0$ gives
\begin{align*}
\left|
\begin{matrix}
a_{t_1,u}^{v,w}(t_2,v)&a_{t_2,u}^{v,w}(t_1,v)\\
a_{t_1,u}^{w,v}(t_2,w)&a_{t_2,u}^{w,v}(t_1,w)
\end{matrix}
\right|
={}&\frac{a_{t_2,u}^{w,v}(t_1,w)}{c^{w,v}(w)}X_{t_2,u}^{v,w}
-\frac{a_{t_1,u}^{w,v}(t_2,w)}{c^{w,v}(w)}X_{t_1,u}^{v,w}.
\end{align*}
Multiplying these identities gives the square of the determinant on the
left.  Transposing its $2\times2$ matrix leaves the determinant unchanged
and identifies its square with $Q(S_{t_1t_2},S_{vw})/6$.  Changing
$d=(v,w)$ to $d=(w,v)$ changes both displayed $X$-variables by a factor
$-1$, so every quadratic monomial is unchanged.  We therefore obtain the
following formula for either
$d=(d_1,d_2)\in\{(w,v),(v,w)\}$, where
$X_{t_i,u}^{d}:=X_{t_i,u}^{d_1,d_2}$:
\begin{align*}
\frac16Q(S_{t_1t_2},S_{vw})
={}&\frac1{c^{v,w}(v)c^{w,v}(w)}
\Bigl[
a_{t_2,u}^{v,w}(t_1,v)a_{t_2,u}^{w,v}(t_1,w)(X_{t_2,u}^{d})^2\\
&\quad-\Bigl(
a_{t_2,u}^{v,w}(t_1,v)a_{t_1,u}^{w,v}(t_2,w)
+a_{t_1,u}^{v,w}(t_2,v)a_{t_2,u}^{w,v}(t_1,w)
\Bigr)X_{t_2,u}^{d}X_{t_1,u}^{d}\\
&\quad+a_{t_1,u}^{v,w}(t_2,v)a_{t_1,u}^{w,v}(t_2,w)(X_{t_1,u}^{d})^2
\Bigr].
\end{align*}
Take the convex combination of the cases $d=(w,v)$ and $d=(v,w)$ with
weights $c^{v,w}(v)$ and $c^{w,v}(w)$, respectively.  These two weights are
the normalized masses of the complementary sides of the edge split $S_{vw}$;
hence they are nonnegative and

\[
c^{v,w}(v)+c^{w,v}(w)=1.
\]
Substitution of the coefficient definitions expresses this convex combination
through the indexed $q$- and $b$-coefficients.
Since $P_1$ contains one representative of each
reversal pair, the sum below includes the pure-square terms associated with
both endpoint orientations.  Thus
\begin{align}
\frac16\sum_{\substack{t_1ut_2\in E_1^2\\vw\in E_2}}
Q(S_{t_1t_2},S_{vw})
&=\sum_{\substack{(t_1,u,t_2)\in P_1\\(v,w)\in D_2}}
\sum_{i\in\{1,2\}}
\sum_{\substack{w'\in N_{v+}(w)\\w''\in N_+(w)}}
q_{t_i,u}^{v,w}(t_{\bar i},t_{\bar i},w',w'')
(X_{t_i,u}^{v,w})^2                                        \notag\\
&-2\sum_{\substack{(t_1,u,t_2)\in P_1\\(v,w)\in D_2}}
\sum_{\substack{w'\in N_{v+}(w)\\w''\in N_+(w)}}
b_{t_1t_2}^{v,w}(w',w'')X_{t_2,u}^{v,w}X_{t_1,u}^{v,w}.
                                                               \label{eq:path-edge}
\end{align}

If $t_1ut_2$ and $v_1wv_2$ are paths of length $2$ in $T_1$ and $T_2$,
respectively, then
\[
\frac16Q(S_{t_1t_2},S_{v_1v_2})=
\left|
\begin{matrix}
a_{t_1,u}^{v_1,w}(t_2,v_2)&a_{t_1,u}^{v_2,w}(t_2,v_1)\\
a_{t_2,u}^{v_1,w}(t_1,v_2)&a_{t_2,u}^{v_2,w}(t_1,v_1)
\end{matrix}
\right|^2.
\]
Apply Lemma~\ref{lem:matrix-identity} to
\begingroup
\small
\setlength{\arraycolsep}{12pt}
\renewcommand{\arraystretch}{1.12}
\[
\begin{pmatrix}
a_{t_1,u}^{v_1,w}(t_2,v_2)&
c^{v_1,w}(v_2)-a_{t_1,u}^{v_1,w}(t_2,v_2)-a_{t_2,u}^{v_1,w}(t_1,v_2)&
a_{t_2,u}^{v_1,w}(t_1,v_2)\\
a_{t_1,u}^{v_2,w}(t_2,v_1)&
c^{v_2,w}(v_1)-a_{t_1,u}^{v_2,w}(t_2,v_1)-a_{t_2,u}^{v_2,w}(t_1,v_1)&
a_{t_2,u}^{v_2,w}(t_1,v_1)
\end{pmatrix}.
\]
\endgroup
For $\alpha=1$, repeated use of the determinant identity used to derive
\eqref{eq:edge-path-square} gives
\begin{align*}
\left|
\begin{matrix}
a_{t_1,u}^{v_1,w}(t_2,v_2)&a_{t_2,u}^{v_1,w}(t_1,v_2)\\
a_{t_1,u}^{v_2,w}(t_2,v_1)&a_{t_2,u}^{v_2,w}(t_1,v_1)
\end{matrix}
\right|
&=\frac{a_{t_2,u}^{v_1,w}(t_1,v_2)}{c^{v_1,w}(v_2)}
\Bigl(c^{v_2,w}(v_1)X_{t_2,u}^{v_2,w}
-c^{v_1,w}(v_2)X_{t_2,u}^{v_1,w}\Bigr)                         \\
&-\frac{a_{t_1,u}^{v_1,w}(t_2,v_2)}{c^{v_1,w}(v_2)}
\Bigl(c^{v_2,w}(v_1)X_{t_1,u}^{v_2,w}
-c^{v_1,w}(v_2)X_{t_1,u}^{v_1,w}\Bigr).
\end{align*}
Correspondingly, $\alpha=0$ yields
\begin{align*}
\left|
\begin{matrix}
a_{t_1,u}^{v_1,w}(t_2,v_2)&a_{t_2,u}^{v_1,w}(t_1,v_2)\\
a_{t_1,u}^{v_2,w}(t_2,v_1)&a_{t_2,u}^{v_2,w}(t_1,v_1)
\end{matrix}
\right|
&=\frac{a_{t_2,u}^{v_2,w}(t_1,v_1)}{c^{v_2,w}(v_1)}
\Bigl(c^{v_2,w}(v_1)X_{t_2,u}^{v_2,w}
-c^{v_1,w}(v_2)X_{t_2,u}^{v_1,w}\Bigr)                         \\
&-\frac{a_{t_1,u}^{v_2,w}(t_2,v_1)}{c^{v_2,w}(v_1)}
\Bigl(c^{v_2,w}(v_1)X_{t_1,u}^{v_2,w}
-c^{v_1,w}(v_2)X_{t_1,u}^{v_1,w}\Bigr).
\end{align*}
Multiplying the two equations gives the square of the determinant on the
left.  Transposition leaves this determinant unchanged and identifies its
square with $Q(S_{t_1t_2},S_{v_1v_2})/6$.  Substituting the definitions of the
indexed $q$-, $b$-, $s$-, and $p$-coefficients gives
\begin{align}
\frac16Q(S_{t_1t_2},S_{v_1v_2})
={}&\sum_{i,j\in\{1,2\}}
q_{t_i,u}^{v_j,w}(t_{\bar i},t_{\bar i},v_{\bar j},v_{\bar j})
(X_{t_i,u}^{v_j,w})^2                                         \notag\\
&-2\sum_{j\in\{1,2\}}
b_{t_1t_2}^{v_j,w}(v_{\bar j},v_{\bar j})
X_{t_1,u}^{v_j,w}X_{t_2,u}^{v_j,w}                             \notag\\
&-2\sum_{i\in\{1,2\}}
s_{t_i,u}^{v_1v_2}(t_{\bar i},t_{\bar i})
X_{t_i,u}^{v_1,w}X_{t_i,u}^{v_2,w}                             \notag\\
&+2p_{t_1t_2}^{v_1v_2}
\Bigl(X_{t_1,u}^{v_1,w}X_{t_2,u}^{v_2,w}
+X_{t_1,u}^{v_2,w}X_{t_2,u}^{v_1,w}\Bigr).             \label{eq:path-path}
\end{align}

Set
\[
K_{t,u}:=N_{t+}(u)\times N_+(u),
\qquad
\Delta_{t,u}:=\{(u',u'):u'\in N_t(u)\}.
\]
Then $K_{t,u}\setminus\Delta_{t,u}=I'_{t,u}$.  For the pure-square terms
whose coefficients belong to the $q$-family, inclusion--exclusion gives
\[
\mathbf 1_{K_{t,u}\times K_{v,w}}
-\mathbf 1_{K_{t,u}\times\Delta_{v,w}}
-\mathbf 1_{\Delta_{t,u}\times K_{v,w}}
+\mathbf 1_{\Delta_{t,u}\times\Delta_{v,w}}
=\mathbf 1_{I'_{t,u}\times I'_{v,w}}.
\]
For the $s$-mixed terms, the corresponding row-coordinate identity is
$\mathbf 1_{K_{t,u}}-\mathbf 1_{\Delta_{t,u}}=\mathbf 1_{I'_{t,u}}$; for
the $b$-mixed terms, the column-coordinate identity is the analogous formula
with $(v,w)$ in place of $(t,u)$.  The mixed terms carrying a
$p$-coefficient occur only in the path--path contribution.  Substituting
\eqref{eq:edge-edge},
\eqref{eq:edge-path}, \eqref{eq:path-edge}, and \eqref{eq:path-path} into
Theorem~\ref{thm:inclusion-exclusion} and collecting like monomials gives
\begin{align}
\frac16Q(\mathcal T_1,\mathcal T_2)
={}&\sum_{\substack{(t,u)\in D_1\\(v,w)\in D_2}}
\sum_{\substack{(u',u'')\in I'_{t,u}\\(w',w'')\in I'_{v,w}}}
q_{t,u}^{v,w}(u',u'',w',w'')(X_{t,u}^{v,w})^2                 \notag\\
&+2\sum_{\substack{(t,u)\in D_1\\(v_1,w,v_2)\in P_2}}
\sum_{(u',u'')\in I'_{t,u}}
s_{t,u}^{v_1v_2}(u',u'')X_{t,u}^{v_1,w}X_{t,u}^{v_2,w}        \notag\\
&+2\sum_{\substack{(t_1,u,t_2)\in P_1\\(v,w)\in D_2}}
\sum_{(w',w'')\in I'_{v,w}}
b_{t_1t_2}^{v,w}(w',w'')X_{t_1,u}^{v,w}X_{t_2,u}^{v,w}        \notag\\
&+2\sum_{\substack{(t_1,u,t_2)\in P_1\\(v_1,w,v_2)\in P_2}}
p_{t_1t_2}^{v_1v_2}
\Bigl(X_{t_1,u}^{v_1,w}X_{t_2,u}^{v_2,w}
+X_{t_1,u}^{v_2,w}X_{t_2,u}^{v_1,w}\Bigr).            \label{eq:raw-quadratic}
\end{align}

We next compare this quadratic polynomial with the expansion of the
right-hand side of Theorem~\ref{thm:sum-of-squares}.  For each directed edge
put
\[
B_{t,u}:=I'_{t,u}\setminus I_{t,u}
=\{(u',t):u'\in N_t(u)\},
\qquad I'_{t,u}=I_{t,u}\mathbin{\dot\cup}B_{t,u},
\]
and define $B_{v,w}$ analogously.  The definitions of $b$, $s$, and $p$ give
the following indexed boundary identities:
\begin{equation*}
\begin{aligned}
q_{t,u}^{v,w}(u',u'',w',v)&=s_{t,u}^{vw'}(u',u'')
&&\text{if }(w',v)\in B_{v,w},\\
q_{t,u}^{v,w}(u',t,w',w'')&=b_{tu'}^{v,w}(w',w'')
&&\text{if }(u',t)\in B_{t,u},\\
q_{t,u}^{v,w}(u',t,w',v)&=p_{tu'}^{vw'}
&&\text{if }(u',t)\in B_{t,u}\text{ and }(w',v)\in B_{v,w},\\
s_{t,u}^{vw'}(u',t)&=p_{tu'}^{vw'}
&&\text{if }(u',t)\in B_{t,u}\text{ and }(w',v)\in B_{v,w},\\
b_{tu'}^{v,w}(w',v)&=p_{tu'}^{vw'}
&&\text{if }(u',t)\in B_{t,u}\text{ and }(w',v)\in B_{v,w}.
\end{aligned}
\end{equation*}
These equalities concern only the coefficients written above at the stated
indices; they do not assert equality of the complete $q$, $s$, $b$, or $p$
coefficient families.
The disjoint decomposition
\[
I'_{t,u}\times I'_{v,w}
=(I_{t,u}\times I_{v,w})
\mathbin{\dot\cup}(I_{t,u}\times B_{v,w})
\mathbin{\dot\cup}(B_{t,u}\times I_{v,w})
\mathbin{\dot\cup}(B_{t,u}\times B_{v,w})
\]
and the preceding identities give the following correspondence.  In the
first factor of each Cartesian product, $I$ and $B$ mean membership in
$I_{t,u}$ and $B_{t,u}$; in the second factor, they mean membership in
$I_{v,w}$ and $B_{v,w}$.
Within an edge--path term write $X_j:=X_{t,u}^{v_j,w}$; within a path--edge
term write $X_i:=X_{t_i,u}^{v,w}$; and within a path--path term write
$X_{ij}:=X_{t_i,u}^{v_j,w}$.  Thus $i\in\{1,2\}$ selects the endpoint
$t_i$, equivalently the directed edge $(t_i,u)$, of the fixed first-tree path,
and $j\in\{1,2\}$ selects $v_j$, equivalently $(v_j,w)$, of the fixed
second-tree path.  In an $s$-row of the table, a lone $I$ or $B$ records
membership of the first-tree pair $(u',u'')$ in $I_{t,u}$ or $B_{t,u}$; in a
$b$-row it records membership of the second-tree pair $(w',w'')$ in
$I_{v,w}$ or $B_{v,w}$.  The
bare letters $q,s,b,p$
in the table below are abbreviations for the previously defined coefficient
families carrying the same letters,
\[
q_{t,u}^{v,w}(u',u'',w',w''),\qquad
s_{t,u}^{v_1v_2}(u',u''),\qquad
b_{t_1t_2}^{v,w}(w',w''),\qquad
p_{t_1t_2}^{v_1v_2};
\]
they are not new quantities.  Indices fixed by the row of the table and by
the surrounding sums are suppressed only to keep the table readable.  If
$\lambda$ denotes the relevant coefficient, a \emph{pure-square term} means
$\lambda X_\alpha^2$, whereas a \emph{mixed term} means
$2\lambda X_\alpha X_\beta$ with $\alpha\ne\beta$.  For the
$p$-family, view the variables as the $2\times2$ array $(X_{ij})$.  Thus an
$s$-mixed term joins entries in the same row of this array, a $b$-mixed term
joins entries in the same column, and the explicit $p$-term in
\eqref{eq:raw-quadratic} consists of the two opposite-corner mixed terms:
\[
\begin{array}{c|c}
\text{term in \eqref{eq:raw-quadratic}}&
\text{corresponding term in Theorem~\ref{thm:sum-of-squares}}\\ \hline
qX^2, I\times I&\text{pure-square term in the }q\text{-sum}\\
qX^2, I\times B&\text{pure-square term in the }s\text{-sum (first boundary identity)}\\
qX^2, B\times I&\text{pure-square term in the }b\text{-sum (second boundary identity)}\\
qX^2, B\times B&\text{pure-square term in the }p\text{-sum (third boundary identity)}\\
2sX_jX_{\bar j}, I&\text{mixed term in the }s\text{-sum}\\
2sX_jX_{\bar j}, B&\text{same-row mixed term in the }p\text{-sum (fourth identity)}\\
2bX_iX_{\bar i}, I&\text{mixed term in the }b\text{-sum}\\
2bX_iX_{\bar i}, B&\text{same-column mixed term in the }p\text{-sum (fifth identity)}\\
2p(X_{11}X_{22}+X_{12}X_{21})&
\text{opposite-corner mixed terms in the }p\text{-sum}
\end{array}
\]
The four Cartesian products in the displayed decomposition are disjoint.
Moreover,
the derivations of \eqref{eq:edge-path}, \eqref{eq:path-edge}, and
\eqref{eq:path-path} already combine the contributions from both orientations
of each reversal orbit, whereas $P_1$ and $P_2$ select one representative of
each orbit.  Hence every indexed term in \eqref{eq:raw-quadratic} occurs once
in the expansion of the four sums of squares.  The two expressions are equal,
which proves the theorem.
\end{proof}

\begin{lemma}[Partial-quartet reconstruction]\label{lem:partial-reconstruction}
Let $\mathcal T$ be an $X$-tree and let $A\mid(X\setminus A)$ be a full
split, so $A\ne\varnothing$ and $X\setminus A\ne\varnothing$.  We say that
an edge $e$ represents this split when the unordered split $S_e$ equals
$A\mid(X\setminus A)$.  The split is represented by an edge of $\mathcal T$
if and only if every partial quartet displayed by it is displayed by
$\mathcal T$.  Consequently the three systems $Q_2(\mathcal T)$,
$Q_3(\mathcal T)$, and $Q_4(\mathcal T)$ determine $\mathcal T$ up to a
label-preserving graph isomorphism.

Moreover, if $|X|\ge4$, an $X$-tree that displays $xx\mid yz$ for every
three distinct taxa $x,y,z$ and displays a full quartet on every four-set is
binary phylogenetic.
\end{lemma}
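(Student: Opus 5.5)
The plan is to prove the three assertions in order: the split characterisation, then reconstruction, then the binary criterion.

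\emph{Split characterisation.} The forward direction is immediate. If $S_e=A\mid(X\setminus A)$ for an edge $e$, then every partial quartet displayed by this split is displayed by $S_e$, and hence by $\mathcal T$.

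For the converse, suppose every partial quartet displayed by $A\mid(X\setminus A)$ is displayed by $\mathcal T$. Let $H_A$ and $H_B$ be the minimal subtrees spanning $\phi(A)$ and $\phi(X\setminus A)$. First we show that $H_A$ and $H_B$ are vertex-disjoint. Suppose a vertex $z$ lies in both. The case $\phi(a)=\phi(b)=z$ with $a\in A$, $b\in X\setminus A$ is excluded directly, since then $a\mid b$ is not displayed. Otherwise $z$ lies on a path between two labelled vertices of one side, say $\phi(a_1),\phi(a_2)$ with $a_1,a_2\in A$ (possibly $z=\phi(a_1)$ with $a_1=a_2$). It also lies on a path between labelled vertices of the other side, or is one of them. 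We then pick at most two taxa from each side so that the corresponding paths meet at $z$. The resulting partial quartet, for instance $a_1a_2\mid b_1b_2$ or $a_1a_2\mid b$, is displayed by the split. But no edge separates its two sides, because any edge separating $a_1,a_2$ from $b_1,b_2$ would have to avoid both paths and hence separate $z$ from itself. This contradiction shows $H_A\cap H_B=\varnothing$.

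Since the subtrees are disjoint, the path joining $H_A$ to $H_B$ has at least one edge, and every edge $e$ on it satisfies $S_e=A\mid(X\setminus A)$. This mirrors the argument in Lemma~\ref{lem:display-path}.

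\emph{Reconstruction.} Knowing which partial quartets are displayed determines the set of full splits represented by edges. We then recover $\mathcal T$ up to label-preserving isomorphism, and two points need care.

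The first point is that distinct edges give distinct splits, so the map from edges to represented splits is injective. Suppose two edges give the same split. Then the path between them contains an internal vertex $v$ such that every branch at $v$ off that path has an empty label fibre, and $\phi^{-1}(v)=\varnothing$. Any such branch would contain a leaf, which carries a label, so no such branch exists and $v$ has degree $2$. An unlabelled vertex of degree $2$ contradicts the definition of an $X$-tree.

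The second point is the tree reconstruction itself. The represented splits are pairwise compatible, so the standard splits-equivalence theorem of Semple and Steel~\cite{SempleSteel2003} reconstructs the tree from them. Vertices correspond to the cells determined by the compatible split system, and $\phi$ is read off as the map sending each taxon to the vertex it determines. Because we have not restricted to phylogenetic trees, labelled interior vertices and degree-$2$ labelled vertices must be allowed. I expect this bookkeeping, and checking that the uniqueness part of that theorem applies to $X$-trees in the present sense, to be the main technical obstacle. The fallback is to verify uniqueness directly by induction on the number of edges, contracting along a split whose side is minimal.

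\emph{Binary criterion.} The argument rests on two uses of the display of $xx\mid yz$.

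First, $\phi$ is injective. If $\phi(x)=\phi(y)$, choose a third taxon $z$. Then $yy\mid xz$ must be displayed, but no edge separates $y$ from $x$.

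Second, every labelled vertex is a leaf. Suppose $\phi(x)$ had degree at least $2$. Two neighbouring branches of $\phi(x)$ contain leaves, which carry labels $y$ and $z$, and these are distinct from $x$ by injectivity. The path from $y$ to $z$ then passes through $\phi(x)$, so $xx\mid yz$ is not displayed, which is a contradiction.

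Together with the definition of an $X$-tree, which requires every vertex of degree at most $2$ to be labelled, this shows $\phi$ is a bijection onto the leaves and the tree is phylogenetic, with all interior vertices of degree at least $3$.

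Finally, suppose some interior vertex has degree at least $4$. Pick one leaf label from each of four distinct branches at that vertex. On this four-set no edge separates two of the taxa from the other two, so no full quartet is displayed, contradicting the hypothesis. Hence every interior vertex has degree exactly $3$, and the tree is binary. The assumption $|X|\ge4$ guarantees that the third taxon $z$ exists and that the tree is non-trivial.
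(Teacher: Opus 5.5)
Your proposal is correct. It differs from the paper's proof in the converse of the split criterion and in how the reconstruction step is closed; the binary criterion is the same as the paper's, with more detail.

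\textbf{Split criterion.} The paper roots $\mathcal T$ at $\phi(b_2)$ for some $b_2\notin A$ and takes a minimal rooted cluster $C\supseteq A$ with top vertex $v$. If $C=A$, it reads off the representing edge as the edge above $v$. Otherwise it builds the obstruction $a_1a_2\mid b_1b_2$, whose two within-side paths both pass through $v$.

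You instead show that the spanning subtrees $H_A$ and $H_B$ are vertex-disjoint, using the same shared-vertex obstruction. You then take any edge of the path connecting them, exactly as in Lemma~\ref{lem:display-path}. Your version is symmetric and needs no rooting. The paper's version identifies the representing edge directly.

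\textbf{Reconstruction.} The paper rebuilds the tree by hand from the laminar family of rooted clusters. In doing so it tacitly uses the fact that distinct edges induce distinct splits. You prove that injectivity explicitly: a repeated split would force an unlabelled vertex of degree $2$. You then invoke the Splits-Equivalence Theorem.

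The obstacle you anticipate at this point does not arise. Semple and Steel define $X$-trees exactly as this paper does: every vertex of degree at most $2$ is labelled, and labelled interior vertices are allowed. Their uniqueness clause is also stated up to label-preserving isomorphism. So the citation closes the step as written, and the inductive fallback is not needed.
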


\begin{proof}
Only the reverse implication in the first assertion needs proof.  Choose
$b_2\in X\setminus A$ and root the graph at its labelled vertex $\phi(b_2)$.
For a vertex, its rooted cluster is the union of the label fibres in its
descendant subtree.  Among the rooted clusters containing $A$, choose one of
minimum cardinality, say $C$ with root vertex $v$.
If $C=A$, the edge above $v$ represents the proposed split.  If $C\ne A$,
minimality supplies two possibly equal taxa $a_1,a_2\in A$ such that the path
between $\phi(a_1)$ and $\phi(a_2)$ contains $v$: either $v$ itself carries a
taxon of $A$, or $A$ meets
at least two child branches of $v$, since otherwise a smaller child cluster
would contain $A$.  Choose $b_1\in C\setminus A$; the path from
$\phi(b_2)$ to $\phi(b_1)$ also contains $v$.  The partial quartet
$a_1a_2\mid b_1b_2$ is displayed by $A\mid(X\setminus A)$, but no tree edge
can display it: such an edge would put $v$ simultaneously on both sides of
the cut, because $v$ lies on both within-side paths.  This contradiction
proves the criterion.

If two trees have the same three partial-quartet systems, apply the criterion
first to every edge split of the first tree using the second tree, and then
vice versa, to obtain the same full edge-split system.  After rooting each
graph at the labelled vertex of the same taxon, the sides away from the root
form the same
laminar family of rooted clusters.  Inclusion in this family recovers the
parent--child relation, and the label fibre at a vertex is its cluster minus
the disjoint union of its child clusters.  This reconstructs the required
label-preserving isomorphism.

For the last assertion, the three-taxon condition first forces distinct taxa
to occupy distinct vertices.  If a labelled vertex were not a leaf, choose
taxa in two different branches at that vertex; then the required split
$xx\mid yz$ could not be displayed.  Hence every taxon labels a distinct
leaf and every leaf is labelled; moreover, there are no degree-$2$ vertices.
Finally a vertex of degree at
least $4$ has four labelled branches; choosing one taxon in each produces a
four-set with no displayed $2$--$2$ split.  Thus every nonleaf vertex has
degree $3$.
\end{proof}

\begin{lemma}[Vertex--side independence bridge]
\label{lem:vertex-side-bridge}
Let $U$ be uniform on $X$ and put $Z_i=\phi_i(U)$ for $i\in\{1,2\}$.
The trees $\mathcal T_1,\mathcal T_2$ are independent if and only if the
finite-valued random variables $Z_1,Z_2$ are independent.  If they are not
independent, there exist vertices $u\in V_1$, $w\in V_2$ with
$P(Z_1=u)>0$ and $P(Z_2=w)>0$, and
incident directed edges $(t,u)\in D_1$, $(v,w)\in D_2$ such that
\[
Q(S_{tu},S_{vw})>0,
\qquad r_{t,u}(u)>0,
\qquad c^{v,w}(w)>0.
\]
\end{lemma}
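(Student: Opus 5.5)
The plan is to translate everything into covariances of indicator events under the uniform law of $U$. Fix edges $tu\in E_1$ and $vw\in E_2$. Put $A=R_{u,t}(t)$, the label set on the $t$-side of $tu$, with $B=X\setminus A$. Put $C=C^{w,v}(v)$, the label set on the $v$-side of $vw$, with $D=X\setminus C$. Then $\{U\in A\}=\{Z_1\in V_1^u(t)\}$ and $\{U\in C\}=\{Z_2\in V_2^w(v)\}$.

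The first step is the elementary identity
\[
\frac{1}{|X|^2}\det\begin{pmatrix}|A\cap C|&|A\cap D|\\|B\cap C|&|B\cap D|\end{pmatrix}
=P(U\in A\cap C)-P(U\in A)P(U\in C).
\]
This holds because the four normalised cells sum to $1$. Combined with the displayed formula for $Q(S_1,S_2)$, it shows that $Q(S_{tu},S_{vw})=0$ if and only if $\operatorname{Cov}(\mathbf 1\{Z_1\in V_1^u(t)\},\mathbf 1\{Z_2\in V_2^w(v)\})=0$. Hence $\mathcal T_1,\mathcal T_2$ are independent if and only if every edge-side event of $Z_1$ is uncorrelated with every edge-side event of $Z_2$. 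Which of the two sides is used does not matter, since complements only change the sign of the covariance.

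\emph{First assertion.} If $Z_1,Z_2$ are independent, then any events determined by $Z_1$ and $Z_2$ respectively are independent, so the trees are independent. For the converse, I would show that every point indicator $\mathbf 1\{Z_1=u\}$ lies in the linear span of the constant $1$ and the edge-side indicators. Writing $\beta_t:=\mathbf 1\{Z_1\in V_1^u(t)\}$ for the indicator of the branch at $u$ through the neighbour $t$, we have
\[
\mathbf 1\{Z_1=u\}=1-\sum_{t\in N(u)}\beta_t,
\]
because the branches at $u$ together with $\{u\}$ partition $V_1$. If $T_1$ is trivial, $Z_1$ is constant and there is nothing to prove, and likewise for $T_2$. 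The same expansion holds for $\mathbf 1\{Z_2=w\}$ with $\gamma_v:=\mathbf 1\{Z_2\in V_2^w(v)\}$, $v\in N(w)$. By bilinearity of covariance, constants dropping out, this gives
\[
\operatorname{Cov}(\mathbf 1\{Z_1=u\},\mathbf 1\{Z_2=w\})
=\sum_{t\in N(u)}\sum_{v\in N(w)}\operatorname{Cov}(\beta_t,\gamma_v).
\]
Under tree independence every summand vanishes, so $P(Z_1=u,Z_2=w)=P(Z_1=u)P(Z_2=w)$ for all $u,w$. This is independence of the finite-valued variables $Z_1,Z_2$.

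\emph{Second assertion.} If $Z_1,Z_2$ are dependent, choose $u,w$ with a nonzero point covariance. Both trees are then non-trivial, since a constant variable is independent of everything, so $N(u)$ and $N(w)$ are nonempty. A nonzero covariance $P(Z_1=u,Z_2=w)-P(Z_1=u)P(Z_2=w)$ forces $P(Z_1=u)>0$ and $P(Z_2=w)>0$; otherwise both terms vanish. These probabilities are exactly $r_{t,u}(u)=|\phi_1^{-1}(u)|/|X|$ and $c^{v,w}(w)=|\phi_2^{-1}(w)|/|X|$, for any choice of incident $t$ and $v$. By the displayed double-sum expansion, some pair $t\in N(u)$, $v\in N(w)$ has $\operatorname{Cov}(\beta_t,\gamma_v)\ne0$. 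By the first step this gives $Q(S_{tu},S_{vw})>0$, with $(t,u)\in D_1$ and $(v,w)\in D_2$ incident to $u$ and $w$ as required.

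The only step needing real care is the determinant–covariance identification. Its orientation and sign conventions must be checked against the definitions of $R_{t,u}$ and $C^{v,w}$. Since only the square enters $Q$, signs ultimately do not matter, so I expect no genuine obstacle. The rest is bookkeeping with the branch partition at a vertex.
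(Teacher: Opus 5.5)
Your proof is correct and follows essentially the paper's route: you identify $Q(S_{tu},S_{vw})$ with $6$ times the squared covariance of the side indicators, expand $\mathbf 1\{Z_1=u\}=1-\sum_{t\in N(u)}\beta_t$ (and likewise at $w$), and use bilinearity; the only difference is that for the converse of the first assertion the paper fixes one root and writes each vertex indicator as its own cluster minus its child clusters, keeping the star expansion at $u,w$ for the second assertion, whereas you use the star expansion for both parts. One notational slip: in the paper's convention the $t$-side of $tu$ is $R_{t,u}(t)$ and the $v$-side of $vw$ is $C^{v,w}(v)$, while $R_{u,t}(t)=\phi_1^{-1}(t)$ and $C^{w,v}(v)=\phi_2^{-1}(v)$ are central fibres; your verbal description and all later uses are correct, so nothing else is affected.
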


\begin{proof}
Every side of an edge split is a union of vertex events, so independence of
$Z_1,Z_2$ implies independence of all edge-split pairs.  Conversely, root
each tree.  For a vertex $z$ of tree $i$, let $C_z^{(i)}$ be the union of
the label fibres in the descendant subtree rooted at $z$, with
$C_z^{(i)}=X$ at the root.  For $C\subseteq X$, write
$\mathbf 1_C:=\mathbf 1_{\{U\in C\}}$.  If $y$ ranges over the
children of $z$, then
\[
\mathbf 1_{\{Z_i=z\}}=\mathbf 1_{C_z^{(i)}}
-\sum_{y\text{ child of }z}\mathbf 1_{C_y^{(i)}}.
\]
Every non-root cluster is a side of an edge split.  If all edge-split pairs
are independent, every cross-covariance between their side indicators is
zero.  Bilinearity of covariance therefore makes every covariance between a
$Z_1$-singleton indicator and a $Z_2$-singleton indicator zero.  Since these
are Bernoulli indicators, this says
\[
P(Z_1=z,Z_2=z')=P(Z_1=z)P(Z_2=z')
\]
for every pair of vertices $z,z'$, which is exactly independence of
$Z_1,Z_2$.

Now suppose the trees are not independent.  Then some vertices $u,w$ with
positive marginal probabilities satisfy
$\operatorname{Cov}(\mathbf 1_{\{Z_1=u\}},
\mathbf 1_{\{Z_2=w\}})\ne0$.  Root the trees at $u,w$.  Neither random
variable is constant, so both roots have incident edges, and
\[
\mathbf 1_{\{Z_1=u\}}=1-\sum_{t\in N(u)}\mathbf 1_{C_t^{(1)}},
\qquad
\mathbf 1_{\{Z_2=w\}}=1-\sum_{v\in N(w)}\mathbf 1_{C_v^{(2)}}.
\]
Bilinearity supplies $t\in N(u)$ and $v\in N(w)$ for which the two branch
indicators have nonzero covariance.  Their $2\times2$ intersection table has
nonzero determinant, hence $Q(S_{tu},S_{vw})>0$.  Orienting both edges toward
the roots gives
$r_{t,u}(u)=P(Z_1=u)>0$ and $c^{v,w}(w)=P(Z_2=w)>0$.
\end{proof}

\begin{corollary}\label{cor:Q-bounds}
For two $X$-trees $\mathcal T_1$ and $\mathcal T_2$, with $n=|X|$,
\[
0\le Q(\mathcal T_1,\mathcal T_2)\le1-\frac{4n-3}{n^3}.
\]
The lower bound is attained if and only if $\mathcal T_1$ and
$\mathcal T_2$ are independent.  If $n\ge4$, then the upper bound is
attained if and only if the trees are binary phylogenetic and are
label-preservingly isomorphic.
\end{corollary}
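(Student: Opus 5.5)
The plan is to derive all three assertions from Theorem~\ref{thm:sum-of-squares}, Lemma~\ref{lem:vertex-side-bridge}, Lemma~\ref{lem:partial-reconstruction}, and a direct count of ordered quadruples.

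\emph{Lower bound and its equality case.} Every coefficient $q$, $s$, $b$, $p$ in Theorem~\ref{thm:sum-of-squares} is a product of normalized cardinalities divided by a positive branch mass, so it is nonnegative. Hence $Q(\mathcal T_1,\mathcal T_2)\ge0$.

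If the trees are independent, then every $X_{t,u}^{v,w}$ vanishes, because $Q(S_{tu},S_{vw})=6(X_{t,u}^{v,w})^2$. Every squared linear form is then zero, so $Q=0$.

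Conversely, suppose the trees are not independent. Lemma~\ref{lem:vertex-side-bridge} supplies $(t,u)\in D_1$ and $(v,w)\in D_2$ with $X_{t,u}^{v,w}\ne0$, $r_{t,u}(u)>0$ and $c^{v,w}(w)>0$. I would keep only the first ($q$-)sum of Theorem~\ref{thm:sum-of-squares} and show that $\sum_{I_{t,u}\times I_{v,w}}q_{t,u}^{v,w}>0$ for this directed pair. The natural choice is $u'=u$ and $w''=w$. Then $(u,u'')\in I_{t,u}$ for every $u''\in N_+(u)$, and $(w',w)\in I_{v,w}$ for every $w'\in N_{v+}(w)$. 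The coefficient is at least
\[
\tfrac12c^{v,w}(w)\,a_{t,u}^{v,w}(u,w')\,a_{t,u}^{v,w}(u'',v)/c^{v,w}(v).
\]
Summing over $u''$ gives $\sum_{u''}a_{t,u}^{v,w}(u'',v)=c^{v,w}(v)$, so what is needed is $a_{t,u}^{v,w}(u,w')>0$ for some $w'\ne v$.

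\emph{Main obstacle.} This is the degenerate case in which the row block $R_{t,u}(u)$ lies entirely inside the column block $C^{v,w}(v)$. Here I would first use the second summand of $q$ together with the symmetric choice ($w'=w$ and a suitable $u'$). If that also fails, I would reselect the directed edges, using the bilinearity argument of Lemma~\ref{lem:vertex-side-bridge} and swapping the roles of the two trees, so that both the central row mass and its overlap with a non-$v$ column are positive. Either way the retained pure-square term $q\,(X_{t,u}^{v,w})^2$ is strictly positive.

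\emph{Upper bound.} Each ordered quadruple contributes at most $n^{-4}$ to $Q$, and it contributes only if it induces some partial quartet. Support-$2$ quadruples of type $2+2$ number $3n(n-1)$. Support-$3$ quadruples of type $2+1+1$ number $6n(n-1)(n-2)$. Support-$4$ quadruples number $n(n-1)(n-2)(n-3)$; each induces three full quartets, but each tree displays at most one of them, so each contributes at most $n^{-4}$. The only excluded quadruples are the $n$ constant ones and the $4n(n-1)$ of type $3+1$. The total is therefore $n^4-4n^2+3n$, which gives $Q\le 1-(4n-3)/n^3$.

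\emph{Equality case of the upper bound.} Equality forces three things: every $xx\mid yy$ and every $xx\mid yz$ is displayed by both trees; both trees display a full quartet on every four-set; and these full quartets agree. By the last part of Lemma~\ref{lem:partial-reconstruction}, when $n\ge4$ both trees are binary phylogenetic. Their systems $Q_2$, $Q_3$, $Q_4$ coincide, so the first part of that lemma gives a label-preserving isomorphism. Conversely, for isomorphic binary phylogenetic trees every counted quadruple contributes exactly $n^{-4}$, so equality holds.
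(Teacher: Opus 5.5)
Your proposal follows the paper's proof. Nonnegativity comes from Theorem~\ref{thm:sum-of-squares}, and strict positivity from the directed edge pair supplied by Lemma~\ref{lem:vertex-side-bridge}. The upper bound excludes quadruples in which some taxon occurs at least three times; your count of $n^4-4n^2+3n$ eligible quadruples is correct. The equality case uses both parts of Lemma~\ref{lem:partial-reconstruction}, exactly as the paper does.

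The one difference is how positivity of the retained $q$-coefficient sum is obtained. The paper gets it from Lemma~\ref{lem:coefficient-lower-bound}, which gives the lower bound $\tfrac12c^{v,w}(w)r_{t,u}(u)$. You instead leave the case $R_{t,u}(u)\subseteq C^{v,w}(v)$ as a hedged plan. That case is not a real obstacle, and the first remedy you name already settles it with $u'=u$ and $w'=w''=w$:
\begin{itemize}
\item In that case $a_{t,u}^{v,w}(u,v)=r_{t,u}(u)>0$.
\item The pairs are admissible: $(u,u'')\in I_{t,u}$ for every $u''\in N_+(u)$, and $(w,w)\in I_{v,w}$.
\item The second summand of $q_{t,u}^{v,w}(u,u'',w,w)$ is $\tfrac12c^{v,w}(w)\,a_{t,u}^{v,w}(u'',w)\,a_{t,u}^{v,w}(u,v)/c^{v,w}(v)$. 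Summing over $u''\in N_+(u)$ gives $\tfrac12c^{v,w}(w)^2\,r_{t,u}(u)/c^{v,w}(v)>0$.
\end{itemize}
Combining this second-summand term with your first-summand estimate $\tfrac12c^{v,w}(w)\bigl(r_{t,u}(u)-a_{t,u}^{v,w}(u,v)\bigr)$ is essentially the proof of Lemma~\ref{lem:coefficient-lower-bound}. The paper additionally sums over all $w''$ at $w'=w$ and uses $c^{v,w}(v)\le1$ to obtain the clean bound.

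You should therefore replace the phrase ``a suitable $u'$'' with this explicit computation. Drop the fallback of reselecting directed edges and swapping the trees: it is never needed, and as written nothing guarantees that such a reselection exists.
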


\begin{proof}
The lower bound follows from Theorem~\ref{thm:sum-of-squares}, and the
quartet covariance between independent $X$-trees is zero.  Conversely,
Lemma~\ref{lem:vertex-side-bridge} supplies an edge-split pair with positive
quartet covariance for
which $r_{t,u}(u)>0$ and $c^{v,w}(w)>0$.
Lemma~\ref{lem:coefficient-lower-bound} below then gives
\[
Q(\mathcal T_1,\mathcal T_2)
\ge\tfrac12c^{v,w}(w)r_{t,u}(u)Q(S_{tu},S_{vw})>0.
\]

The probability that some taxon is drawn at least three times among the four
draws is $(4n-3)/n^3$, and every such ordered quadruple contributes zero to
$Q(\mathcal T_1,\mathcal T_2)$.  Every other ordered
quadruple contributes at most $1$, proving the upper bound.  Equality means
that every ordered quadruple outside the event just excluded contributes $1$:
the two trees display the same
partial quartet on every support of size $2$ or $3$, and the same full
quartet on every four-set.  If $n\ge4$, the final assertion of
Lemma~\ref{lem:partial-reconstruction} makes both trees binary
phylogenetic, and its reconstruction assertion makes them
label-preservingly isomorphic.  Conversely, a binary phylogenetic tree
displays every non-full partial quartet and exactly one full quartet on each
four-set.  Two label-preservingly isomorphic such trees therefore make every
eligible ordered quadruple contribute $1$, so equality holds.
\end{proof}

To relate quartet covariance to quartet distance, we record the following
counting identity.

\begin{lemma}\label{lem:Q-counts}
For two $X$-trees $\mathcal T_1$ and $\mathcal T_2$,
\begin{align*}
Q(\mathcal T_1,\mathcal T_2)
=\frac1{|X|^4}\Bigl(&24|Q_4(\mathcal T_1)\cap Q_4(\mathcal T_2)|
+12|Q_3(\mathcal T_1)\cap Q_3(\mathcal T_2)|\\
&+6|Q_2(\mathcal T_1)\cap Q_2(\mathcal T_2)|
-12d_Q(\mathcal T_1,\mathcal T_2)\Bigr).
\end{align*}
\end{lemma}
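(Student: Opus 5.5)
The plan is to evaluate $Q(\mathcal T_1,\mathcal T_2)$ directly from its definition. We sum over all $|X|^4$ ordered quadruples, each of probability $|X|^{-4}$, and group them by their support and multiplicity pattern. The description of which quadruples induce partial quartets, given before the definition of quartet covariance, tells us exactly which classes contribute. Quadruples with multiplicity pattern $4$ or $3+1$ induce nothing and contribute zero. This leaves three classes: support $2$ with pattern $2+2$, support $3$ with pattern $2+1+1$, and support $4$.

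\emph{Support $2$.} Fix an unordered pair $\{x,y\}$. There is exactly one partial quartet $x\mid y$ on it, namely $xx\mid yy$. The ordered quadruples with multiset $\{x,x,y,y\}$ number $\binom42=6$. Such a quadruple contributes $+1$ (times $|X|^{-4}$) exactly when both trees display $x\mid y$. There is no ``different displayed quartet'' on this support, so no negative term arises. This gives $6|Q_2(\mathcal T_1)\cap Q_2(\mathcal T_2)|$.

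\emph{Support $3$.} Fix a partial quartet $xx\mid yz$. Its multiset $\{x,x,y,z\}$ is realised by $4!/2!=12$ ordered quadruples. Each such quadruple induces only this one partial quartet, so again the negative term cannot occur. This gives $12|Q_3(\mathcal T_1)\cap Q_3(\mathcal T_2)|$.

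\emph{Support $4$.} Fix a four-set $Y$. It is realised by $24$ ordered quadruples, and each of them induces all three full quartets on $Y$. An $X$-tree displays at most one full quartet on $Y$, as noted before the definition of $d_Q$. So for each tree there is either no displayed quartet on $Y$ or exactly one. If both trees display the same quartet, each quadruple contributes $+1$, giving $24$ per common element of $Q_4(\mathcal T_1)\cap Q_4(\mathcal T_2)$. If they display distinct quartets, each quadruple contributes $-\tfrac12$, giving $-12$ per four-set counted by $d_Q$. If at least one tree displays no quartet on $Y$, the contribution is $0$.

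The only point needing care is interpreting the definition consistently: ``probability of drawing four taxa that induce a partial quartet displayed by both trees.'' For support-$4$ quadruples this event counts the quadruple once, not once per common quartet, which is legitimate because at most one quartet per tree exists on $Y$. Likewise, the negative event counts each quadruple once when the two trees display distinct quartets. I expect this bookkeeping, rather than any computation, to be the main obstacle. It is the same convention used in the proof of Theorem~\ref{thm:inclusion-exclusion}, where each quadruple contributes either $|X|^{-4}$ or $-(2|X|^4)^{-1}$. Summing the three classes and multiplying by $|X|^{-4}$ yields the stated identity.
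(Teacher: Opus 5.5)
Your proposal is correct and is essentially the paper's own argument. Both classify ordered quadruples by their induced multiset, count $6$, $12$ and $24$ orderings for supports of size $2$, $3$ and $4$, and assign $+1$ per ordering to each commonly displayed object and $-\tfrac12$ per ordering to each four-set counted by $d_Q$. Your explicit remark that each tree displays at most one full quartet on a four-set, so no quadruple is counted twice or falls in both events, spells out a point the paper's short proof leaves implicit.
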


\begin{proof}
For a partial quartet $x_1x_1\mid x_2x_2$, there are $6$ ordered
$4$-tuples containing $x_1$ and $x_2$ twice.  For
$x_1x_1\mid x_2x_3$, there are $12$ ordered $4$-tuples containing
$x_1$ twice and $x_2,x_3$ once.  For a $4$-set there are $24$
ordered $4$-tuples containing each element once.  A common displayed object
contributes $+1$ for each ordering, whereas a four-set counted by $d_Q$
contributes $-1/2$ for each of its $24$ orderings.  Every ordered quadruple
has probability $|X|^{-4}$, which proves the formula.
\end{proof}

The $2/3$-conjecture is now a direct consequence of
Theorem~\ref{thm:sum-of-squares}.

\begin{corollary}\label{cor:two-thirds}
For two binary phylogenetic $X$-trees $\mathcal T_1$ and $\mathcal T_2$,
\[
d_Q(\mathcal T_1,\mathcal T_2)
\le\frac23\binom{|X|}{4}+\binom{|X|}{3}+\frac16\binom{|X|}{2}.
\]
\end{corollary}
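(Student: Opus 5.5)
Set $n=|X|$ and take $\mathcal T_1,\mathcal T_2$ binary phylogenetic. The argument combines nonnegativity of the quartet covariance with the counting identity of Lemma~\ref{lem:Q-counts}. By Theorem~\ref{thm:sum-of-squares}, every term on the right-hand side is a nonnegative coefficient times a square. The coefficients $q,s,b,p$ are products of normalized cardinalities, so they are nonnegative. Hence $Q(\mathcal T_1,\mathcal T_2)\ge0$, which is also the lower bound of Corollary~\ref{cor:Q-bounds}. Lemma~\ref{lem:Q-counts} then gives
\[
12\,d_Q(\mathcal T_1,\mathcal T_2)\le 24|Q_4(\mathcal T_1)\cap Q_4(\mathcal T_2)|
+12|Q_3(\mathcal T_1)\cap Q_3(\mathcal T_2)|+6|Q_2(\mathcal T_1)\cap Q_2(\mathcal T_2)|.
\]

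Next I evaluate the right-hand side for binary phylogenetic trees. Every taxon labels a distinct leaf, so every edge-free side is nonempty. Any two distinct taxa $x,y$ are separated by the pendant edge at $x$, so $x\mid y$ is displayed. Hence $Q_2(\mathcal T_i)$ consists of all $\binom n2$ two-taxon partial quartets, and the intersection has size $\binom n2$. Likewise $xx\mid yz$ is displayed by the pendant edge of $x$ for all distinct $x,y,z$. There are three such objects per three-set, so $|Q_3(\mathcal T_1)\cap Q_3(\mathcal T_2)|=3\binom n3$. On each four-set, each tree displays exactly one full quartet. The four-set therefore contributes to $Q_4(\mathcal T_1)\cap Q_4(\mathcal T_2)$ exactly when it is not counted by $d_Q$, giving $|Q_4(\mathcal T_1)\cap Q_4(\mathcal T_2)|=\binom n4-d_Q$.

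Substituting these values gives
\[
12d_Q\le 24\binom n4-24d_Q+36\binom n3+6\binom n2,
\]
that is, $36d_Q\le 24\binom n4+36\binom n3+6\binom n2$. Dividing by $36$ yields exactly $d_Q\le\frac23\binom n4+\binom n3+\frac16\binom n2$.

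The main point to check is the bookkeeping of the three counts against the conventions of Lemma~\ref{lem:Q-counts}. In particular, $Q_3$ must count unordered partial quartets $xx\mid yz$ with the doubled taxon distinguished, which gives $3$ per three-set, and $Q_2$ must count unordered pairs. The conclusion depends on these counts, since the constants $1$ and $\tfrac16$ in the bound are exactly $36\binom n3/36$ and $6\binom n2/36$. For $n<4$ both sides are trivially consistent because $d_Q=0$.
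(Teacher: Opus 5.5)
Your proposal is correct and follows essentially the paper's argument: the nonnegativity of $Q$ from Theorem~\ref{thm:sum-of-squares} is combined with Lemma~\ref{lem:Q-counts}. In that lemma, binary phylogenetic trees share all $\binom{n}{2}$ two-taxon and $3\binom{n}{3}$ three-taxon partial quartets, and share full quartets on exactly $\binom{n}{4}-d_Q$ four-sets. The paper substitutes these counts directly into the expression for $Q$, without spelling out the pendant-edge justification you give.
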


\begin{proof}
By Lemma~\ref{lem:Q-counts},
\[
Q(\mathcal T_1,\mathcal T_2)=\frac1{|X|^4}\Biggl(
24\left(\binom{|X|}{4}-d_Q(\mathcal T_1,\mathcal T_2)\right)
-12d_Q(\mathcal T_1,\mathcal T_2)
+36\binom{|X|}{3}+6\binom{|X|}{2}\Biggr),
\]
because every $4$-set induces one quartet in each tree, while all
non-full partial quartets are displayed by both binary phylogenetic
trees.  The nonnegativity of $Q$ yields the claimed inequality.
\end{proof}

Pachter~\cite{Pachter2026} also proves the same asymptotic $2/3$
conclusion by a common-root planarisation argument and a five-leaf identity.
Our proof instead derives the displayed finite-$|X|$ inequality from
the nonnegative sum-of-squares representation in
Theorem~\ref{thm:sum-of-squares}.

For $|X|\ge2$ this explicit bound is not sharp, because two phylogenetic
trees are never independent.  For any taxon $x$, compare the two pendant
edge splits having singleton side $\{x\}$.  Their $2\times2$ intersection
table has determinant $|X|-1\ne0$, so their split covariance is positive.
On the other hand, the difference
between the upper bound and the infinite family of tree pairs from
\cite{ChorErdosKomornik2019} is only $O(|X|^2\log|X|)$.

We end this section with a lower bound on the coefficient multiplying
$(X_{t,u}^{v,w})^2$ in the $q$-square family, namely the first nonnegative
sum in Theorem~\ref{thm:sum-of-squares}.

\begin{lemma}\label{lem:coefficient-lower-bound}
For two $X$-trees $\mathcal T_1$ and $\mathcal T_2$, and directed edges
$(t,u)\in D_1$, $(v,w)\in D_2$,
\[
\sum_{\substack{(u',u'')\in I_{t,u}\\(w',w'')\in I_{v,w}}}
q_{t,u}^{v,w}(u',u'',w',w'')
\ge\frac12c^{v,w}(w)r_{t,u}(u).
\]
In particular,
\[
Q(\mathcal T_1,\mathcal T_2)
\ge\frac12\sum_{(t,u)\in D_1}\sum_{(v,w)\in D_2}
c^{v,w}(w)r_{t,u}(u)Q(S_{tu},S_{vw}).
\]
\end{lemma}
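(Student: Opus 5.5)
Since every coefficient $q_{t,u}^{v,w}(u',u'',w',w'')$ is a sum of products of nonnegative normalized cardinalities, all terms in the sum are nonnegative. The plan is therefore to discard most of them and keep only a subfamily whose total is computable exactly. The natural choice is to restrict to pairs with $u''=u$ and $w''=w$. By the definitions of $I_{t,u}$ and $I_{v,w}$, every $(u',u)$ with $u'\in N_{t+}(u)$ lies in $I_{t,u}$, since either $u'=u$ or $u''=u\notin\{t,u'\}$. Likewise every $(w',w)$ with $w'\in N_{v+}(w)$ lies in $I_{v,w}$. So the left-hand side is at least
\[
\sum_{u'\in N_{t+}(u)}\sum_{w'\in N_{v+}(w)}q_{t,u}^{v,w}(u',u,w',w).
\]

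Next substitute the definition:
\[
q_{t,u}^{v,w}(u',u,w',w)=\tfrac12 c^{v,w}(w)\Bigl(a(u',w')\tfrac{a(u,v)}{c(v)}+a(u,w')\tfrac{a(u',v)}{c(v)}\Bigr),
\]
with all superscripts and subscripts fixed. Drop the second (nonnegative) product. The key bookkeeping step is then the identity
\[
\sum_{u'\in N_{t+}(u)}\sum_{w'\in N_{v+}(w)}a_{t,u}^{v,w}(u',w')=r_{u,t}(u).
\]
This holds because the row blocks $R_{t,u}(u')$ over $u'\in N_{t+}(u)$ partition the $u$-side of the edge split $S_{tu}$. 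The column blocks $C^{v,w}(w')$ over $w'\in N_{v+}(w)$ partition the $w$-side of $S_{vw}$, so the cells sum to the mass of the intersection of the two sides. I will check this against the paper's conventions, in particular whether the intersection is exactly $|(\text{$u$-side})\cap(\text{$w$-side})|/|X|$.

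Keeping only the $u'=u$ row is cruder but cleaner. With $u'=u$, the term becomes
\[
\tfrac12c^{v,w}(w)\cdot 2\,a(u,w')\,\tfrac{a(u,v)}{c(v)},
\]
and summing over $w'\in N_{v+}(w)$ yields
\[
c^{v,w}(w)\,\frac{a(u,v)}{c(v)}\Bigl(r_{t,u}(u)-a(u,v)\Bigr).
\]
This is not directly comparable to $\tfrac12 c(w)r(u)$, so the row choice is the main obstacle. What I would actually try is symmetric in rows and columns. Keep the terms with $u''=u$ or $w''=w$, and use the fact that the normalizing factors $a(\cdot,v)/c(v)$ sum to $1$ over the full row index $u''\in N_+(u)$, and similarly over columns. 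The idea is to sum over $u''\in N_+(u)$ for fixed $u'$, with the constraint $u''\notin\{t,u'\}$ or $u'=u$. Taking $u'=u$ frees all of $u''\in N_+(u)$. Then
\[
\sum_{u''}a(u'',v)/c(v)=1-a(t,v)/c(v)\,,
\]
where the excluded piece is exactly the $t$-branch. So one gets
\[
\sum_{u''\in N_+(u)}q(u,u'',w',w'')\ge\tfrac12 c(w'')\,a(u,w')
\]
once the $t$-block is handled. Here I would exploit that the cell $a(t,v)$ is, in the paper's conventions, the cell at the centre row indexed by the full opposite branch; I expect it to drop out of $I_{t,u}$ only through $u''=t$, which is excluded. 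Hence $\sum_{u''}a(u'',v)=c(v)$ over the allowed range, up to that term.

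After that reduction, take $w''=w$ (allowed for all $w'$) and sum over $w'\in N_{v+}(w)$. Using $\sum_{w'}a(u,w')=r_{t,u}(u)$, this gives
\[
\tfrac12 c^{v,w}(w)\,r_{t,u}(u).
\]
The "in particular" statement then follows by discarding the $s$-, $b$-, $p$-sums in Theorem~\ref{thm:sum-of-squares} (all nonnegative) and using $(X_{t,u}^{v,w})^2=Q(S_{tu},S_{vw})/6$. The expected difficulty is purely making the index ranges line up so that the $t$-row exclusion does not lose mass. If it does, I would fall back on the edge-edge identity displayed before \eqref{eq:edge-edge}, which expresses the full coefficient sum as $1$, and subtract the boundary contributions explicitly.
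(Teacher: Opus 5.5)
\textbf{There is a genuine gap, in the column step.} Your framework matches the paper's. All $q$-coefficients are nonnegative, taking $u'=u$ makes every $u''\in N_+(u)$ admissible, and the ``in particular'' follows by discarding the $s$-, $b$-, $p$-sums.

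Write $a=a_{t,u}^{v,w}$, $c=c^{v,w}$, $r=r_{t,u}$. With $u'=u$ the index $u''=t$ is \emph{not} excluded. So $\sum_{u''\in N_+(u)}a(u'',v)=c(v)$ exactly, not $c(v)-a(t,v)$, and
$\sum_{u''\in N_+(u)}q_{t,u}^{v,w}(u,u'',w',w'')=\tfrac12c(w'')\bigl(a(u,w')+c(w')a(u,v)/c(v)\bigr)$.

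The identity $\sum_{w'\in N_{v+}(w)}a(u,w')=r(u)$ is false. The column blocks indexed by $N_{v+}(w)$ are all blocks except the one indexed by $v$, so this sum equals $r(u)-a(u,v)$. After you discard the summand $c(w')a(u,v)/c(v)$, you are left with only $\tfrac12c(w)\bigl(r(u)-a(u,v)\bigr)$.

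Keeping that summand does not rescue the restriction $w''=w$ either, since it then gives
\[
\tfrac12c(w)\Bigl(r(u)-a(u,v)+\bigl(1-c(v)\bigr)\tfrac{a(u,v)}{c(v)}\Bigr).
\]
This is strictly smaller than $\tfrac12c(w)r(u)$ whenever $c(v)>\tfrac12$ and $c(w)a(u,v)>0$.

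For a concrete case, take one-edge trees on $X=\{x_1,x_2,y\}$ with $\phi_1(x_1)=u$, $\phi_1(x_2)=\phi_1(y)=t$, $\phi_2(x_1)=\phi_2(x_2)=v$, and $\phi_2(y)=w$. Then:
\begin{itemize}
\item $\sum_{w'\in N_{v+}(w)}a(u,w')=a(u,w)=0\ne\tfrac13=r(u)$;
\item your retained terms ($u'=u$, $w''=w$) total $\tfrac1{36}$, or $0$ after your discard;
\item the target is $\tfrac1{18}$.
\end{itemize}

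\textbf{The missing idea} is to use the row $w'=w$ of $I_{v,w}$ in full. Because $w'=w$, every $w''\in N_+(w)$ is admissible, so the weights $c(w'')$ sum to $1$ rather than to $c(w)$. This row therefore contributes $\tfrac12\bigl(a(u,w)+c(w)a(u,v)/c(v)\bigr)$. Since $0<c(v)\le1$, the second summand is at least $\tfrac12c(w)a(u,v)$, which is exactly the mass lost from the excluded $v$-column.

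For every other $w'\in N_v(w)$, the admissible term $w''=w$ contributes at least $\tfrac12c(w)a(u,w')$. Adding up, and using $a(u,w)\ge c(w)a(u,w)$, gives $\tfrac12c(w)r(u)$. This is the paper's bookkeeping with $\sigma(w')\ge c(w)$ and $\sigma(w)=1$; in the example above it gives $\tfrac1{12}\ge\tfrac1{18}$.

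Your fallback to the identity that the coefficient sum over all four orientations equals $1$ does not supply the bound by itself. That sum includes the other orientations and the non-admissible index pairs, and bounding what must be subtracted is the whole difficulty.
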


\begin{proof}
Fix $(t,u)\in D_1$ and $(v,w)\in D_2$, and abbreviate
\[
a(x,y):=a_{t,u}^{v,w}(x,y),\qquad
c(y):=c^{v,w}(y),\qquad r(x):=r_{t,u}(x).
\]
For $w'\in N_{v+}(w)$, put
\[
J(w'):=\{w''\in N_+(w):w'=w\text{ or }w''\notin\{v,w'\}\},
\qquad
\sigma(w'):=\sum_{w''\in J(w')}c(w'').
\]
The pair $(w',w)$ belongs to $I_{v,w}$ for every
$w'\in N_{v+}(w)$, so $\sigma(w')\ge c(w)$.  If $w'=w$, then every
$w''\in N_+(w)$ is allowed, whence $J(w)=N_+(w)$ and $\sigma(w)=1$.

All $q$-coefficients are nonnegative.  Since $u'=u$ makes every
$u''\in N_+(u)$ admissible in $I_{t,u}$, retaining only those terms gives
\[
\sum_{(u',u'')\in I_{t,u}}
\sum_{(w',w'')\in I_{v,w}}q_{t,u}^{v,w}(u',u'',w',w'')
\ge
\sum_{w'\in N_{v+}(w)}\sum_{w''\in J(w')}
\sum_{u''\in N_+(u)}q_{t,u}^{v,w}(u,u'',w',w'').
\]
The row blocks indexed by $N_+(u)$ partition $X$, so
\[
\sum_{u''\in N_+(u)}a(u'',v)=c(v),
\qquad
\sum_{u''\in N_+(u)}a(u'',w')=c(w').
\]
The definition of the coefficient family $q$ turns the retained triple sum into
\begin{equation}
\frac12\sum_{w'\in N_{v+}(w)}\sigma(w')
\left(a(u,w')+c(w')\frac{a(u,v)}{c(v)}\right).
\label{eq:lemma3-retained-sum}
\end{equation}
In $T_2$, the component of $T_2-vw$ containing $v$ contains at least one
label, so $0<c(v)\le1$.  The column blocks indexed
by $N_{v+}(w)$ are all column blocks except that indexed by $v$.  Hence
\begin{align*}
\sum_{w'\in N_{v+}(w)}\sigma(w')a(u,w')
&\ge c(w)\bigl(r(u)-a(u,v)\bigr),\\
\sum_{w'\in N_{v+}(w)}\sigma(w')c(w')\frac{a(u,v)}{c(v)}
&\ge \sigma(w)c(w)\frac{a(u,v)}{c(v)}
\ge c(w)a(u,v).
\end{align*}
Substitution into \eqref{eq:lemma3-retained-sum} proves the first claim.

For the final assertion, retain only the $q$-square family, namely the first
nonnegative sum in Theorem~\ref{thm:sum-of-squares}, and use
$Q(S_{tu},S_{vw})=6(X_{t,u}^{v,w})^2$:
\begin{align*}
\frac16Q(\mathcal T_1,\mathcal T_2)
&\ge\sum_{(t,u)\in D_1}\sum_{(v,w)\in D_2}
\left(
\sum_{(u',u'')\in I_{t,u}}
\sum_{(w',w'')\in I_{v,w}}
q_{t,u}^{v,w}(u',u'',w',w'')
\right)
(X_{t,u}^{v,w})^2\\
&\ge\frac1{12}\sum_{(t,u)\in D_1}\sum_{(v,w)\in D_2}
c^{v,w}(w)r_{t,u}(u)Q(S_{tu},S_{vw}).
\end{align*}
Multiplication by $6$ proves the stated bound.
\end{proof}

\section{Statistical consequences and dependence measures}
\label{sec:statistical-consequences}

\subsection{Normalised quartet dependence measures}

The quartet covariance is a similarity measure for $X$-trees related to the
quartet distance.  Nevertheless, quartet covariance never reaches
$1$, even when the compared trees are identical.  A low value may reflect
incompatibility or lack of resolution.  We therefore consider two
normalisations.

For two non-trivial $X$-trees, define the \emph{quartet compatibility index}
$c_Q(\mathcal T_1,\mathcal T_2)$ to be the quartet covariance divided by the
sum of (i) the probability of drawing a partial quartet displayed by both
trees and (ii) half the probability that the draw induces distinct full
quartets in the two trees.  Its
denominator can be computed from Theorem~\ref{thm:inclusion-exclusion}
by replacing the determinant in $Q(S_1,S_2)$ with the permanent of the
same matrix.  Here
\[
\operatorname{per}\!\begin{pmatrix}a&b\\c&d\end{pmatrix}:=ad+bc,
\]
in contrast to the determinant $ad-bc$.  Equivalently,
\[
c_Q(\mathcal T_1,\mathcal T_2)=
\frac{
24|Q_4(\mathcal T_1)\cap Q_4(\mathcal T_2)|
\mkern-2mu+\mkern-2mu12|Q_3(\mathcal T_1)\cap Q_3(\mathcal T_2)|
\mkern-2mu+\mkern-2mu6|Q_2(\mathcal T_1)\cap Q_2(\mathcal T_2)|
\mkern-2mu-\mkern-2mu12d_Q(\mathcal T_1,\mathcal T_2)
}{
24|Q_4(\mathcal T_1)\cap Q_4(\mathcal T_2)|
\mkern-2mu+\mkern-2mu12|Q_3(\mathcal T_1)\cap Q_3(\mathcal T_2)|
\mkern-2mu+\mkern-2mu6|Q_2(\mathcal T_1)\cap Q_2(\mathcal T_2)|
\mkern-2mu+\mkern-2mu12d_Q(\mathcal T_1,\mathcal T_2)
}.
\]
This denominator is strictly positive.  Indeed, choose one edge split
$A\mid B$ from $\mathcal T_1$ and one edge split $C\mid D$ from
$\mathcal T_2$.  Since all four sides are nonempty, either both
$A\cap C$ and $B\cap D$ are nonempty, or both $A\cap D$ and $B\cap C$ are
nonempty.  Choosing one taxon from each of the corresponding two
intersections gives a partial quartet in
$Q_2(\mathcal T_1)\cap Q_2(\mathcal T_2)$.

Two full splits $A\mid B$ and $C\mid D$ are called \emph{compatible} if
at least one of $A\cap C$, $A\cap D$, $B\cap C$, and $B\cap D$ is empty.
A split system is compatible if every pair of its splits is compatible.

\begin{proposition}\label{prop:compatibility}
For two non-trivial $X$-trees $\mathcal T_1$ and $\mathcal T_2$,
$0\le c_Q(\mathcal T_1,\mathcal T_2)\le1$.  Moreover,
$c_Q(\mathcal T_1,\mathcal T_2)=0$ if and only if the trees are
independent.  Furthermore, $c_Q(\mathcal T_1,\mathcal T_2)=1$ if and only if
the union of their edge-split systems is compatible.
\end{proposition}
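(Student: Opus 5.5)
Write $N:=24|Q_4\cap Q_4|+12|Q_3\cap Q_3|+6|Q_2\cap Q_2|$ (intersections over the two trees) and $d:=d_Q(\mathcal T_1,\mathcal T_2)$, so that $c_Q=(N-12d)/(N+12d)$. The denominator is positive, as shown just before the proposition. The plan reduces every assertion to statements about $N$, $d$, and the numerator $|X|^4Q(\mathcal T_1,\mathcal T_2)$ given by Lemma~\ref{lem:Q-counts}.

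\textbf{Bounds and the zero case.} The numerator equals $|X|^4Q(\mathcal T_1,\mathcal T_2)$, which is nonnegative by Theorem~\ref{thm:sum-of-squares}. Since $N\ge0$ and $d\ge0$, we have $N-12d\le N+12d$, so $0\le c_Q\le1$. Because the denominator is positive, $c_Q=0$ holds exactly when $Q(\mathcal T_1,\mathcal T_2)=0$. By Corollary~\ref{cor:Q-bounds}, this happens exactly when the trees are independent.

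\textbf{The case $c_Q=1$.} Here $c_Q=1$ holds if and only if $d=0$, that is, if and only if no four-set carries distinct displayed full quartets in the two trees. It remains to show that $d=0$ is equivalent to compatibility of the union of the two edge-split systems. Splits within a single tree are always pairwise compatible: for edges $e,f$ of one tree, some side of $S_e$ contains a side of $S_f$, so one of the four intersections is empty. Hence only cross pairs $A\mid B$ from $\mathcal T_1$ and $C\mid D$ from $\mathcal T_2$ need attention.

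\emph{Incompatibility gives $d>0$.} If $A\mid B$ and $C\mid D$ are incompatible, choose $a_1\in A\cap C$, $a_2\in A\cap D$, $b_1\in B\cap C$, $b_2\in B\cap D$. These four taxa are distinct. The tree $\mathcal T_1$ displays $a_1a_2\mid b_1b_2$, while $\mathcal T_2$ displays $a_1b_1\mid a_2b_2$, which is a different quartet on the same four-set. Hence $d>0$.

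\emph{$d>0$ gives incompatibility.} Suppose $\mathcal T_1$ displays $xy\mid zw$ via an edge split $A\mid B$, so $x,y\in A$ and $z,w\in B$. Suppose $\mathcal T_2$ displays a different quartet on the same four-set, say $xz\mid yw$, via $C\mid D$. Then $x\in A\cap C$, $y\in A\cap D$, $z\in B\cap C$ and $w\in B\cap D$, so all four intersections are nonempty and the two splits are incompatible. The remaining quartet $xw\mid yz$ is handled in the same way.

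I expect no real obstacle. The only points needing care are that within-tree splits are automatically compatible, and that a displayed full quartet comes from an edge split with both pairs on opposite sides; both follow directly from the definitions.
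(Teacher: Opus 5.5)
Your proof is correct and follows the paper's argument: $c_Q=0$ reduces to $Q(\mathcal T_1,\mathcal T_2)=0$ via positivity of the denominator and Corollary~\ref{cor:Q-bounds}, and $c_Q=1$ reduces to $d_Q=0$, which you match to compatibility by the same cross-intersection argument. You also spell out two points the paper leaves implicit: the bounds $0\le c_Q\le1$, and the automatic compatibility of splits within a single tree, which the direction $d_Q=0\Rightarrow$ compatibility needs.
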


\begin{proof}
By the preceding definition, compatible splits cannot display
conflicting partial quartets.  Conversely, if two edge splits are
incompatible, choosing one taxon from each of their four nonempty
cross-intersections produces a conflicting quartet.  Hence the union of the
two edge-split systems is compatible exactly when
$d_Q(\mathcal T_1,\mathcal T_2)=0$.  The defining fraction for $c_Q$ is equal
to $1$ exactly when this conflicting term
vanishes.  The zero statement follows from Corollary~\ref{cor:Q-bounds} and
the strict positivity of the denominator established above.
\end{proof}

To measure similarity rather than compatibility, define the
\emph{quartet correlation}
\[
\rho_Q(\mathcal T_1,\mathcal T_2):=
\frac{Q(\mathcal T_1,\mathcal T_2)}
{\sqrt{Q(\mathcal T_1,\mathcal T_1)Q(\mathcal T_2,\mathcal T_2)}}.
\]

\begin{proposition}\label{prop:correlation}
For two non-trivial $X$-trees $\mathcal T_1$ and $\mathcal T_2$,
$0\le\rho_Q(\mathcal T_1,\mathcal T_2)
\le c_Q(\mathcal T_1,\mathcal T_2)$.
Furthermore, $\rho_Q=0$ if and only if the trees are independent, while
$0<\rho_Q=c_Q$ if and only if the trees are label-preservingly isomorphic
and $Q(\mathcal T_1,\mathcal T_1)>0$.  In the latter case,
$\rho_Q=c_Q=1$.
\end{proposition}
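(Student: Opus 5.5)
The plan is to reduce everything to the counting identity of Lemma~\ref{lem:Q-counts}. Put $n=|X|$ and abbreviate
\[
C:=24|Q_4(\mathcal T_1)\cap Q_4(\mathcal T_2)|+12|Q_3(\mathcal T_1)\cap Q_3(\mathcal T_2)|+6|Q_2(\mathcal T_1)\cap Q_2(\mathcal T_2)|,
\qquad d:=d_Q(\mathcal T_1,\mathcal T_2).
\]
For $i\in\{1,2\}$ put $W_i:=24|Q_4(\mathcal T_i)|+12|Q_3(\mathcal T_i)|+6|Q_2(\mathcal T_i)|$. A tree never displays two distinct full quartets on one four-set, so $d_Q(\mathcal T_i,\mathcal T_i)=0$. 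Lemma~\ref{lem:Q-counts} then gives
\[
n^4Q(\mathcal T_1,\mathcal T_2)=C-12d,\qquad n^4Q(\mathcal T_i,\mathcal T_i)=W_i,\qquad c_Q=\frac{C-12d}{C+12d}.
\]

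First I will check that $\rho_Q$ is well defined, i.e.\ that $W_i>0$. A non-trivial $X$-tree has at least two leaves, and these carry labels. Hence $Z_i=\phi_i(U)$ is nonconstant and so is not independent of itself. By Lemma~\ref{lem:vertex-side-bridge}, $\mathcal T_i$ is then not independent of itself, and Corollary~\ref{cor:Q-bounds} gives $Q(\mathcal T_i,\mathcal T_i)>0$. (Alternatively, an edge split of $\mathcal T_i$ compared with itself has determinant $|A||B|\ne0$.) Consequently the hypothesis $Q(\mathcal T_1,\mathcal T_1)>0$ in the statement holds automatically. The inequality $\rho_Q\ge0$ and the characterisation of $\rho_Q=0$ are immediate from Corollary~\ref{cor:Q-bounds}, because the denominator of $\rho_Q$ is positive.

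The key step is the bound $C+12d\le\min(W_1,W_2)$, which I will prove by comparing weights object by object. Every common displayed partial quartet of size $k\in\{2,3,4\}$ carries the same weight in $C$ and in $W_1$. Every four-set $Y$ counted by $d$ carries weight $12$ in $C+12d$. On such a $Y$, the tree $\mathcal T_1$ displays a full quartet that is not common, so that quartet appears in $W_1$ with weight $24$ and does not appear in $C$. Hence $W_1-(C+12d)\ge 12d\ge0$, and the same argument works for $W_2$. It follows that $C+12d\le\sqrt{W_1W_2}$. When $C-12d\ge0$ (which holds by the lower bound), this gives
\[
\rho_Q=\frac{C-12d}{\sqrt{W_1W_2}}\le\frac{C-12d}{C+12d}=c_Q .
\]

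For the equality case, suppose $0<\rho_Q=c_Q$. Then $C-12d>0$, which forces $\sqrt{W_1W_2}=C+12d\le\min(W_1,W_2)$, and therefore $W_1=W_2=C+12d$. The slack estimate $W_i-(C+12d)\ge12d$ then gives $d=0$, and with $d=0$ we get $W_i=C$. Since $C\le W_i$ holds term by term, this means $Q_k(\mathcal T_1)=Q_k(\mathcal T_2)$ for every $k$. The reconstruction part of Lemma~\ref{lem:partial-reconstruction} then yields a label-preserving isomorphism.

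Conversely, if the trees are label-preservingly isomorphic, their partial-quartet systems coincide. Then $d=0$ and $C=W_1=W_2>0$, so $\rho_Q=c_Q=1$.

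I expect the main obstacle to be the bookkeeping in the weight comparison. One must check that conflicting four-sets contribute nothing to $C$, and that the inequality remains strict enough to force $d=0$ in the equality case, rather than just $C+12d=W_i$.
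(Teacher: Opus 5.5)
Your proposal is correct and follows essentially the same route as the paper: both compare the weighted counts from Lemma~\ref{lem:Q-counts}, using that each disagreement four-set contributes a non-common full quartet of weight $24$ to each self-comparison, which gives $C+12d\le W_i$ (the paper phrases this as $Q(\mathcal T_1,\mathcal T_2)/Q(\mathcal T_i,\mathcal T_i)\le c_Q$ and multiplies the two inequalities), and in the equality case both force $d=0$ and equal partial-quartet systems before invoking Lemma~\ref{lem:partial-reconstruction}. The differences are cosmetic: you obtain $Q(\mathcal T_i,\mathcal T_i)>0$ via Lemma~\ref{lem:vertex-side-bridge} and Corollary~\ref{cor:Q-bounds}, whereas the paper simply notes that a non-trivial tree displays a support-$2$ partial quartet, and you should explicitly cite the paper's earlier observation that the denominator $C+12d$ of $c_Q$ is strictly positive.
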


\begin{proof}
For $i\in\{1,2\}$,
\[
Q(\mathcal T_i,\mathcal T_i)=\frac1{|X|^4}
\left(24|Q_4(\mathcal T_i)|+12|Q_3(\mathcal T_i)|+6|Q_2(\mathcal T_i)|\right).
\]
Each self-comparison contains every common displayed partial quartet.  For
each $i\in\{1,2\}$, the full-quartet system $Q_4(\mathcal T_i)$ also contains
one quartet for each of the $d_Q$ four-sets on which the two trees disagree.
Thus, in particular,
\[
|Q_4(\mathcal T_i)|\ge
|Q_4(\mathcal T_1)\cap Q_4(\mathcal T_2)|+d_Q(\mathcal T_1,\mathcal T_2),
\]
with the analogous common-system containments for $Q_2$ and $Q_3$.
Because each tree is non-trivial, it displays at least one support-$2$ partial
quartet; hence both self covariances are positive.  Comparing the preceding
self sums with the numerator and
denominator of $c_Q$ gives, for $i\in\{1,2\}$,
\[
\frac{Q(\mathcal T_1,\mathcal T_2)}{Q(\mathcal T_i,\mathcal T_i)}
\le c_Q(\mathcal T_1,\mathcal T_2).
\]
Multiplying the two inequalities and taking nonnegative square roots gives
$\rho_Q\le c_Q$.  Moreover, $\rho_Q=0$ is now equivalent to
$Q(\mathcal T_1,\mathcal T_2)=0$, and
hence to independence by Corollary~\ref{cor:Q-bounds}.

Suppose now that $0<\rho_Q=c_Q$.  Then the two inequalities above are both
equalities and their common numerator is positive.  Put
\begin{align*}
A&:=24|Q_4(\mathcal T_1)\cap Q_4(\mathcal T_2)|
+12|Q_3(\mathcal T_1)\cap Q_3(\mathcal T_2)|
+6|Q_2(\mathcal T_1)\cap Q_2(\mathcal T_2)|,\\
B&:=12d_Q(\mathcal T_1,\mathcal T_2).
\end{align*}
Thus $A$ is the weighted number of common displayed objects.  The denominator
of $c_Q$ is $A+B$, whereas, for $i\in\{1,2\}$, the weighted self-comparison
count $|X|^4Q(\mathcal T_i,\mathcal T_i)$ is $A+2B$ plus the nonnegative weighted
count of the remaining partial quartets displayed only by that tree, excluding the
disagreement quartets already included in $2B$.  Equality of the positive
ratios therefore forces $B=0$ and both exclusive counts to vanish.  Thus the
two trees have the same displayed partial quartets of support $2$, $3$, and
$4$.  Those systems determine the $X$-tree up to a graph isomorphism
preserving every taxon label by Lemma~\ref{lem:partial-reconstruction}.
Conversely, a label-preserving isomorphism makes the two displayed systems and
both self covariances equal; if the self covariance is positive, both
normalized indices are $1$.  This proves the stated equality characterization.
\end{proof}

\subsection{Tree-metric distance covariance}

One can also weight partial quartets by their strength of support.  This
is especially useful for trees equipped with edge lengths.  An
\emph{edge-weighted $X$-tree} is a triple $\mathcal T=(T,\phi,l)$, where
$(T,\phi)$ is an $X$-tree and $l:E(T)\to(0,\infty)$ assigns a positive
length to each edge.  An unweighted tree is recovered by taking
$l(e)=1$ for all $e$.  The \emph{length} of a partial quartet
$x_1x_2\mid x_3x_4$ is the sum of the lengths of all edges $uv$ for
which $S_{uv}$ displays it.  The tree also defines a pseudometric
$d_{\mathcal T}$ on $X$: for distinct taxa $x_1,x_2$,
$d_{\mathcal T}(x_1,x_2)$ is the length of
$x_1x_1\mid x_2x_2$, and $d_{\mathcal T}(x,x):=0$.

For two edge-weighted $X$-trees
$\mathcal T_i=(T_i,\phi_i,l_i)$, define their \emph{distance covariance}
$\dCov^2(\mathcal T_1,\mathcal T_2)$ by multiplying every ordered
quadruple contribution to $Q(\mathcal T_1,\mathcal T_2)$ by the lengths
of its two induced partial quartets.  More explicitly,

\[
\dCov^2(\mathcal T_1,\mathcal T_2)
:=\sum_{\substack{tu\in E_1\\vw\in E_2}}
Q(S_{tu},S_{vw})l_1(tu)l_2(vw).
\]
This quantity is nonnegative, and it is zero exactly when the underlying
$X$-trees are independent.  Define their \emph{distance correlation} by
\[
\dCor(\mathcal T_1,\mathcal T_2):=
\frac{\dCov^2(\mathcal T_1,\mathcal T_2)}
{\sqrt{\dCov^2(\mathcal T_1,\mathcal T_1)
\dCov^2(\mathcal T_2,\mathcal T_2)}}.
\]
When the denominator is zero, set
$\dCor(\mathcal T_1,\mathcal T_2):=0$.
This ratio is the quantity often denoted by squared distance correlation;
throughout this paper we use the shorter symbol $\dCor$ for that convention.
These names are standard in statistics for measures of dependence
between random vectors; we now justify the same notation for
edge-weighted $X$-trees.

Sz\'ekely et al.~\cite{SzekelyRizzoBakirov2007} introduced distance
covariance in 2007 to detect nonlinear dependence between Euclidean
random variables.  For an $n$-point sample
$\{(x_i,y_i):1\le i\le n\}$, empirical distance covariance is defined
from the pairwise distances in the two coordinates.  Thus it can be
viewed as an invariant of two finite pseudometrics on the same ground
set.  For pseudometrics $d_1,d_2:X\times X\to\mathbb R_{\ge0}$ on
$X=\{x_1,\ldots,x_n\}$, define, for $i\in\{1,2\}$ and
$j,k\in\{1,\ldots,n\}$,
\[
D_i(j,k):=d_i(x_j,x_k)
-\frac1n\sum_{l=1}^n\bigl(d_i(x_j,x_l)+d_i(x_k,x_l)\bigr)
+\frac1{n^2}\sum_{l=1}^n\sum_{m=1}^n d_i(x_l,x_m),
\]
where the summation indices $l,m$ are taxon indices and are unrelated to the
edge-length functions $l_i$.
Then define
\[
\dCov^2(d_1,d_2):=\frac1{n^2}\sum_{j=1}^n\sum_{k=1}^n
D_1(j,k)D_2(j,k).
\]
Here $D_i(j,k)$ is a centred distance-matrix entry and is unrelated to the
directed-edge set $D_i$ introduced in Section~\ref{sec:tree-framework}.  We use the same symbol
$\dCov^2$ for the tree quantity and the pseudometric quantity because their
argument types distinguish them; Theorem~\ref{thm:tree-distance-covariance}
compares these two quantities explicitly.

\begin{theorem}\label{thm:tree-distance-covariance}
For $X=\{x_1,\ldots,x_n\}$ and any two edge-weighted $X$-trees
$\mathcal T_1,\mathcal T_2$,
\[
\dCov^2(\mathcal T_1,\mathcal T_2)
=\frac32\dCov^2(d_{\mathcal T_1},d_{\mathcal T_2}).
\]
\end{theorem}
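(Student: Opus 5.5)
Both sides are bilinear in the edge-length functions, so it suffices to treat elementary edge pairs and then sum. The tree pseudometric decomposes as a sum of cut pseudometrics:
\[
d_{\mathcal T_i}(x,y)=\sum_{e\in E_i}l_i(e)\,\delta_{S_e}(x,y),
\]
where $\delta_{S_e}(x,y)=1$ if $x,y$ lie on different sides of the split $S_e$ and $0$ otherwise. Indeed, for distinct $x,y$ the edges whose split displays $xx\mid yy$ are exactly the edges on the path between $\phi_i(x)$ and $\phi_i(y)$ (Lemma~\ref{lem:display-path}), and for $x=y$ both sides vanish. Double centring is linear, and $\dCov^2(d_1,d_2)$ is a bilinear form in the centred matrices. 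Hence
\[
\dCov^2(d_{\mathcal T_1},d_{\mathcal T_2})=\sum_{tu\in E_1}\sum_{vw\in E_2}l_1(tu)l_2(vw)\,\dCov^2(\delta_{S_{tu}},\delta_{S_{vw}}).
\]
Comparing with the defining sum for the tree quantity, it therefore suffices to prove the single-split identity
\[
Q(S_1,S_2)=\tfrac32\,\dCov^2(\delta_{S_1},\delta_{S_2})
\]
for two full splits $S_1=A\mid B$ and $S_2=C\mid D$ of $X$.

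To prove this identity, write $f=\mathbf 1_A$ and $g=\mathbf 1_C$ as functions on $X$ under the uniform measure, with means $p=|A|/n$ and $q=|C|/n$. Since $\delta_{S_1}(x,y)=(f(x)-f(y))^2$, it is a squared Euclidean distance on a one-dimensional embedding. For such distances the double-centred matrix is
\[
D_1(j,k)=-2\bigl(f(x_j)-p\bigr)\bigl(f(x_k)-p\bigr),
\]
which follows by expanding the square and noting that the terms depending on only one index are removed by centring. The analogous formula holds for $D_2$ with $g$ and $q$. Therefore
\[
\dCov^2(\delta_{S_1},\delta_{S_2})=4\Bigl(\tfrac1n\sum_j (f(x_j)-p)(g(x_j)-q)\Bigr)^2=4\operatorname{Cov}(f,g)^2.
\]
It remains to note that $\operatorname{Cov}(f,g)=\frac{|A\cap C|}{n}-pq$, and that this equals $\frac1{n^2}\det\begin{pmatrix}|A\cap C|&|A\cap D|\\|B\cap C|&|B\cap D|\end{pmatrix}$. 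This is the standard $2\times2$ identity, checked using $|A\cap C|+|A\cap D|=|A|$ and the corresponding row and column sums.

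Finally, the displayed formula for $Q(S_1,S_2)$ in Section~\ref{sec:tree-framework} gives $Q(S_1,S_2)=6\operatorname{Cov}(f,g)^2$. The ratio between $6\operatorname{Cov}(f,g)^2$ and $4\operatorname{Cov}(f,g)^2$ is exactly $3/2$, so the single-split identity holds. Summing with the weights $l_1(tu)l_2(vw)$ then proves the theorem.

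The only step requiring care is the centring computation $D_1(j,k)=-2(f(x_j)-p)(f(x_k)-p)$. I will verify it directly:
\[
(f_j-f_k)^2=f_j^2+f_k^2-2f_jf_k,
\]
and the row, column and grand means remove $f_j^2$, $f_k^2$ and the constant terms, leaving $-2(f_j-p)(f_k-p)$. This works for any real-valued function $f$, so the fact that $f$ is $0/1$-valued is not needed. The notational clash between the centred matrices $D_i$ and the directed-edge sets $D_i$ will be avoided by working with $f$ and $g$ throughout.
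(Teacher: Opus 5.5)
Your proof is correct and follows the paper's route: expand both sides bilinearly over pairs of edge splits, compute the double-centred split metrics, and identify the result with the normalised $2\times2$ determinant from the displayed formula for $Q(S_1,S_2)$. The only difference is the final computation. You recognise the centred split metric as the rank-one matrix with entries $-2(f(x_j)-p)(f(x_k)-p)$, which gives $4\operatorname{Cov}(f,g)^2$ immediately. The paper instead lists the entries $-2r_{\bar i}^{\,2}$ and $2r_1r_2$ case by case and checks the cancellation after substituting $a_i^j=r_ic_j+(-1)^{i+j}\Delta$; that substitution encodes the same fact that $\Delta$ is the covariance of the two indicators.
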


\begin{proof}
A split $S=A\mid B$ defines the \emph{split metric} $d_S$, where
$d_S(x_1,x_2)=0$ if $x_1,x_2$ lie on the same side and $d_S(x_1,x_2)=1$
otherwise.  For an edge-weighted tree,
\[
d_{\mathcal T_i}=\sum_{uv\in E(T_i)}l_i(uv)d_{S_{uv}}.
\]
Because metric distance covariance is bilinear in the two distance
matrices, it suffices to consider two one-edge trees whose edges have
length $1$.  Let their splits be $R_1\mid R_2$ and $C_1\mid C_2$, put
$A_i^j:=R_i\cap C_j$, and let $r_i,c_j,a_i^j$ denote the corresponding
cardinalities divided by $n$.  Write $A=(a_i^j)_{i,j\in\{1,2\}}$ and
$\Delta:=\det A$.
Then
\[
\dCov^2(\mathcal T_1,\mathcal T_2)
=Q(R_1\mid R_2,C_1\mid C_2)=6\Delta^2,
\]
so it remains to prove
\[
\dCov^2(d_{R_1\mid R_2},d_{C_1\mid C_2})=4\Delta^2.
\]
For brevity, let $d_1=d_{R_1\mid R_2}$ and
$d_2=d_{C_1\mid C_2}$.  Given $i,j\in\{1,2\}$, write $\bar i,\bar j$
for their complementary indices.  If $x_k,x_l\in R_i$, then
\[
D_1(k,l)=\frac1{n^2}
\bigl(-2n|R_{\bar i}|+2|R_1||R_2|\bigr)=-2r_{\bar i}^{\,2},
\]
whereas if $x_k$ and $x_l$ are separated by $R_1\mid R_2$, then

\[
D_1(k,l)=\frac1{n^2}
\bigl(n^2-n(|R_1|+|R_2|)+2|R_1||R_2|\bigr)=2r_1r_2.
\]
The corresponding formulas for $D_2$ are obtained by replacing $R,r$
with $C,c$.  Consequently,
\begin{align*}
\dCov^2(d_1,d_2)=4\Biggl(&
\sum_{i,j\in\{1,2\}}(a_i^j r_{\bar i}c_{\bar j})^2
-2\sum_{i=1}^2a_i^1a_i^2r_{\bar i}^{\,2}c_1c_2\\
&-2\sum_{j=1}^2a_1^ja_2^jr_1r_2c_{\bar j}^{\,2}
+2(a_1^1a_2^2+a_1^2a_2^1)r_1r_2c_1c_2\Biggr).
\end{align*}
Since the entries of $A$ sum to $1$,
\[
a_i^j=r_ic_j+(-1)^{i+j}\Delta.
\]
Substituting this relation for every entry, all terms except the
multiples of $\Delta^2$ cancel, and one obtains
$\dCov^2(d_1,d_2)=4\Delta^2$.
\end{proof}

The main result of \cite{SzekelyRizzoBakirov2007} states that empirical
distance covariance estimates the distance covariance of the underlying
joint distribution when both vectors have finite first moments.  This
population quantity is nonnegative and vanishes exactly under
independence.  The result was extended from Euclidean spaces to metric
spaces of strong negative type by Lyons~\cite{Lyons2013}.  For finite
metrics this class agrees with strictly negative-type spaces, and tree
metrics belong to it~\cite{HjorthEtAl1998}.

A pair $(\mathcal T_1,\mathcal T_2)$ of edge-weighted $X$-trees defines
a discrete joint law by drawing $U$ uniformly from $X$ and setting
$Z_i=\phi_i(U)$ for $i\in\{1,2\}$.  The mass of
$(v_1,v_2)$ is
\[
\frac{|\phi_1^{-1}(v_1)\cap\phi_2^{-1}(v_2)|}{|X|}.
\]
The random vertices are independent if and only if the trees are independent,
by Lemma~\ref{lem:vertex-side-bridge}.  Thus the preceding results justify
using $\dCov^2$ and $\dCor$ as measures of dependence between edge-weighted
trees.

\subsection{Strict positivity of Bergsma--Dassios
\texorpdfstring{$\tau^*$}{tau-star} under dependence}
\label{subsec:tau-star}

On path trees, the unweighted quartet covariance is directly related to the
sign covariance introduced by Bergsma and
Dassios~\cite{BergsmaDassios2014} as a measure of dependence between ordinal
random variables, extending Kendall's $\tau$.  They write $t^*$ for a finite
sample and $\tau^*$ for the
expected value of $t^*$ on four independent observations.  Because this
statistic is invariant under reversing either order, the weak order
of a finite sample corresponds naturally to a labelled tree whose underlying
graph is a path.  Bergsma and Dassios proved that $\tau^*\ge0$, with
equality if and only if the variables are independent, when the joint law
is discrete, jointly absolutely continuous, or a mixture of these two
types, and conjectured the same conclusion for arbitrary bivariate laws.
Drton, Han, and Shi~\cite{DrtonHanShi2020} subsequently established both
conclusions for random vectors with continuous margins, allowing joint
laws that need not be absolutely continuous.  We now establish both
conclusions for arbitrary real-valued bivariate laws.

For the unnormalised Bergsma--Dassios convention and
$z=(z_1,z_2,z_3,z_4)$, let $r,s,t,u$ be pairwise distinct occurrence indices
with $\{r,s,t,u\}=\{1,2,3,4\}$; the notation $rs\mid tu$ divides the four
occurrences into the two indicated pairs.  Set
\[
\begin{aligned}
I_{rs\mid tu}(z)&:=\mathbf{1}\!\left\{
\max(z_r,z_s)<\min(z_t,z_u)\text{ or }
\max(z_t,z_u)<\min(z_r,z_s)\right\},\\
a(z)&:=I_{13\mid24}(z)-I_{12\mid34}(z).
\end{aligned}
\]
The indicator $I_{rs\mid tu}$ and the kernel $a(z)$ are local to this
subsection.  They are unrelated to the earlier index sets $I_{t,u}$ and
$I'_{t,u}$ and to the coefficient entries $a_{t,u}^{v,w}$.
For four independent copies
$(X^{(1)},Y^{(1)}),\ldots,(X^{(4)},Y^{(4)})$ of $(X,Y)$, define
\[
\tau^*(X,Y):=\mathbb E\!\left[
a(X^{(1)},X^{(2)},X^{(3)},X^{(4)})
a(Y^{(1)},Y^{(2)},Y^{(3)},Y^{(4)})\right].
\]

\begin{corollary}\label{cor:tau-star}
Let $(X,Y)$ be a real-valued random vector.  If $X$ and $Y$ are not
independent, then $\tau^*(X,Y)>0$.
\end{corollary}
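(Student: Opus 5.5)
We prove the quantitative bound $\tau^*(X,Y)\ge2\mathcal B(X,Y)$ and deduce the corollary from it. Here
\[
\mathcal B(X,Y)=\int\!\!\int\bigl(F_{XY}(x,y)-F_X(x)F_Y(y)\bigr)^2\,dF_X(x)\,dF_Y(y)
\]
is the unscaled BKR functional. If $X,Y$ are dependent, then $F_{XY}\ne F_XF_Y$ at some point $(x_0,y_0)$. Right-continuity in each coordinate gives a rectangle $[x_0,x_0+\delta)\times[y_0,y_0+\delta)$ on which the difference stays away from zero. This rectangle carries positive $dF_X\,dF_Y$ mass, or the mass can be taken at atoms or support points. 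If it has no mass, we shift to the nearest support points, using that $F$ is constant off the support. Hence $\mathcal B>0$. We must fix which convention (strict or weak inequalities) the paper's $\mathcal B$ uses, so that this step matches it.

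\textbf{Step 1: finite rational tables.} Let $(X,Y)$ take finitely many values $x_1<\dots<x_m$ and $y_1<\dots<y_n$ with rational cell probabilities $p_{ij}=N_{ij}/N$. Take a label set of size $N$ with $N_{ij}$ labels in cell $(i,j)$. Build $\mathcal T_1$ as the path $x_1-\dots-x_m$ with the row labels attached, and $\mathcal T_2$ as the analogous path for the columns. Four draws with replacement are then exactly four i.i.d. copies. For path trees, a quadruple induces the displayed partial quartet $rs\mid tu$ exactly when $I_{rs|tu}=1$ with strict separation. Checking the kernel $a(z)a(z')$ against the quartet covariance contributions, summed over the $24$ orderings, should give $\tau^*=c\,Q(\mathcal T_1,\mathcal T_2)$ for an explicit constant $c$. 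We must verify the ties and multiplicity cases: quadruples with support $2$ and $3$ must match the partial-quartet conventions. Lemma~\ref{lem:coefficient-lower-bound} then gives a lower bound on $Q(\mathcal T_1,\mathcal T_2)$ by a sum over directed edges. On a path, a directed edge $(x_{k+1},x_k)$ has $r(u)=P(X=x_k)$ and split $\{X\le x_k\}\mid\{X>x_k\}$. Moreover $Q(S_{tu},S_{vw})=6\bigl(P(X\le x_k,Y\le y_l)-P(X\le x_k)P(Y\le y_l)\bigr)^2$. Summing over the orientation toward the lower endpoint turns the bound into a Riemann–Stieltjes form of $\mathcal B$ with atom weights $P(X=x_k)P(Y=y_l)$. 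This yields $\tau^*\ge2\mathcal B$; the constant is fixed by the bookkeeping. The other orientations contribute the upper-closed analogue, and dropping them is harmless.

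\textbf{Step 2: arbitrary finite tables.} Both $\tau^*$ and $\mathcal B$ are polynomials in the cell probabilities for a fixed finite support. Rational tables are dense, so the inequality passes to the limit.

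\textbf{Step 3: general laws.} This is the main obstacle. Take nested right-closed quantisers $g_k$, so that $X_k=g_k(X)$ takes finitely many values and is monotone nondecreasing in $X$. Refine the dyadic partitions and include the atoms, so that the partitions separate points in the limit. We need three things.
\begin{enumerate}
\item[(a)] $\tau^*(X_k,Y_k)\to\tau^*(X,Y)$. The kernel $a$ is bounded and depends only on order comparisons. For nested quantisers with right-closed cells, the indicators of strict comparisons $X_k^{(r)}<X_k^{(s)}$ converge a.s. to those for $X$, because ties created by quantisation disappear eventually. Dominated convergence then applies.
\item[(b)] $\liminf\mathcal B(X_k,Y_k)\ge\mathcal B(X,Y)$. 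We would express $\mathcal B$ through bounded indicator kernels of six or so independent copies and again use a.s. convergence plus dominated convergence. This gives convergence, not just $\liminf$. Continuity from above of the measures of the half-lines $\{X_k\le g_k(x)\}\downarrow\{X\le x\}$ handles the closed endpoints.
\end{enumerate}
We expect the care in Step 3 to lie in matching strict and weak inequalities so that quantisation ties vanish almost surely.

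Finally, combining $\tau^*\ge2\mathcal B$ with $\mathcal B>0$ under dependence gives $\tau^*(X,Y)>0$.
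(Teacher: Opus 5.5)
Your proposal is correct and follows essentially the paper's own proof. It uses the path-tree encoding of rational finite tables (your constant is $c=2/3$, i.e.\ $Q=\frac32\tau^*$), the coefficient bound of Lemma~\ref{lem:coefficient-lower-bound} restricted to the lower-endpoint orientations, rational approximation of the table, and nested quantisation with dominated convergence. The only deviations are minor: you get $\mathcal B>0$ from joint right-continuity of $D$ after shifting $x_0,y_0$ rightwards to the end of a flat stretch of the marginal (the paper instead uses left-sided level sets and the bound $P(F_X(X)\le t)\le t$), and you get the limit of $\mathcal B(X_k,Y_k)$ from a six-copy indicator kernel rather than from continuity from above.
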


\begin{proof}
For the distribution-function discrepancy, write
$D(x,y):=F(x,y)-F_X(x)\allowbreak F_Y(y)$, where
$F(x,y)=P(X\le x,Y\le y)$, $F_X(x)=P(X\le x)$, and
$F_Y(y)=P(Y\le y)$.  This local symbol $D$ is
unrelated to the directed-edge sets $D_i$ and to the centred distance entries
$D_i(j,k)$ used above.  Introduce the unnormalised
Blum--Kiefer--Rosenblatt discrepancy
\begin{equation}\label{eq:BKR-discrepancy}
\mathcal B(X,Y):=\int_{\mathbb R^2}D(x,y)^2\,dF_X(x)\,dF_Y(y).
\end{equation}
We prove the quantitative inequality
\begin{equation}\label{eq:tau-star-BKR-lower-bound}
\tau^*(X,Y)\ge2\mathcal B(X,Y).
\end{equation}

Suppose first that $X$ and $Y$ have finite ordered marginal supports
$x_1<\cdots<x_m$ and $y_1<\cdots<y_n$.
We call each position $(x_i,y_j)$ in the resulting marginal-support grid a cell,
including positions having zero joint mass.
Put $p_i=P(X=x_i)>0$, $q_j=P(Y=y_j)>0$, and, for $i<m$, $j<n$,
\[
\Delta_{ij}:=P(X\le x_i,Y\le y_j)-P(X\le x_i)P(Y\le y_j).
\]
The symbols $p_i$ and $q_j$ are local marginal masses in this subsection and
are unrelated to the coefficient families $p$ and $q$ from
Section~\ref{sec:tree-framework}.
Assume initially that all cell probabilities are rational.  Replace
each cell mass by the corresponding number of equally weighted labels, using
one common finite label set for the two coordinates.  Let $T_X$ denote the
labelled path tree whose $i$th vertex carries exactly the labels in row $i$,
and let $T_Y$ denote the labelled path tree whose $j$th vertex carries exactly
the labels in column $j$.  Thus they are labelled trees of the type defined in
Section~\ref{sec:tree-framework}, with the replicated label set playing the
role of the taxon set.

For a realised coordinate vector $z=(z_1,z_2,z_3,z_4)$, write
$z_{(1)}\le z_{(2)}\le z_{(3)}\le z_{(4)}$ for its order statistics.  When
$z_{(2)}<z_{(3)}$, define its \emph{strict middle-cut partition}
\[
\Pi_z:=
\bigl\{\{r:z_r\le z_{(2)}\},\ \{r:z_r\ge z_{(3)}\}\bigr\};
\]
both blocks have two occurrence indices.  In the corresponding coordinate
path tree, an edge separates the four occurrence indices into two pairs
exactly when this strict inequality holds; in that case the induced positional
partition is the unique partition $\Pi_z$.  Repeated
occurrences of one label have the same $X$- and $Y$-coordinates and therefore
lie in the same block of every strict middle-cut partition.  Equal $X$- and
$Y$-partitions give one common partial quartet.  If the partitions differ,
their common refinement has four singleton blocks, so no label can be repeated
and the displayed objects are two different full quartets.  If either
coordinate has no strict middle cut, the ordered quadruple contributes zero
both to the quartet-covariance count and to the product kernel.  This identifies
the relevant events for all multiplicities and ties.

Let $\Pi_X$ and $\Pi_Y$ denote the strict middle-cut partitions of the four
$X$- and $Y$-coordinates when they are defined, and set
\begin{align*}
C_4&:=\{\Pi_X\text{ and }\Pi_Y\text{ are defined and }\Pi_X=\Pi_Y\},\\
D_4&:=\{\Pi_X\text{ and }\Pi_Y\text{ are defined and }\Pi_X\ne\Pi_Y\}.
\end{align*}

To determine the coefficients of $P(C_4)$ and $P(D_4)$ in the representation
of $\tau^*$, symmetrise over the four occurrence indices.  Conditional on a common
strict partition, a uniform permutation of the four occurrence indices
sends it to $12\mid34$, $13\mid24$, or $14\mid23$ equally often.  The
corresponding values of $a$ are $-1,+1,0$, so the conditional mean of the
product is $2/3$.  Conditional on two distinct strict partitions, the
six ordered pairs of distinct partitions occur equally often; two
products are $-1$ and four are zero, so the conditional mean is $-1/3$.
Exchangeability therefore gives
\[
\tau^*(X,Y)=\frac23P(C_4)-\frac13P(D_4).
\]
By the definition of quartet covariance,
\begin{equation}\label{eq:path-Q-tau-star}
Q(T_X,T_Y)=P(C_4)-\frac12P(D_4)=\frac32\tau^*(X,Y).
\end{equation}

For $1\le i<m$, let $S_i^X$ be the split displayed by the edge of $T_X$
between the vertices carrying $x_i$ and $x_{i+1}$.  For $1\le j<n$, define
$S_j^Y$ analogously from $T_Y$.  Here an endpoint mass means the normalized
label mass $r_{t,u}(u)$ or $c^{v,w}(w)$ of the head vertex's label fibre in a
directed-edge orientation.  For the first edge, the orientation whose head is
the vertex carrying $x_i$ contributes $p_i$, and the reverse orientation
contributes $p_{i+1}$.  For the second edge the analogous two contributions
are $q_j$ and $q_{j+1}$.  Thus the two orientation sums are
$p_i+p_{i+1}$ and $q_j+q_{j+1}$, respectively.  The corresponding quadrant
determinant is
\[
\det\begin{pmatrix}
P(X\le x_i,Y\le y_j)&P(X\le x_i,Y>y_j)\\
P(X>x_i,Y\le y_j)&P(X>x_i,Y>y_j)
\end{pmatrix}=\Delta_{ij}.
\]
Thus $Q(S_i^X,S_j^Y)=6\Delta_{ij}^2$, and
Lemma~\ref{lem:coefficient-lower-bound} gives
\[
Q(T_X,T_Y)\ge
3\sum_{i=1}^{m-1}\sum_{j=1}^{n-1}
(p_i+p_{i+1})(q_j+q_{j+1})\Delta_{ij}^2.
\]
Together with \eqref{eq:path-Q-tau-star},
\begin{align}
\tau^*(X,Y)
&\ge2\sum_{i=1}^{m-1}\sum_{j=1}^{n-1}
(p_i+p_{i+1})(q_j+q_{j+1})\Delta_{ij}^2\notag\\
&\ge2\sum_{i=1}^{m-1}\sum_{j=1}^{n-1}p_iq_j\Delta_{ij}^2
=2\mathcal B(X,Y).                                      \label{eq:finite-tau-BKR}
\end{align}
The atomic formula for $\mathcal B$ also has terms with $i=m$ or $j=n$,
but $D(x_m,y_j)=D(x_i,y_n)=0$, so those boundary terms vanish.
Moreover, if the finite-support law is dependent, then some
$\Delta_{ij}$ is nonzero.  Since every displayed marginal mass is positive,
the second inequality in \eqref{eq:finite-tau-BKR} is then strict.  Thus, for
finite-support laws, equality $\tau^*=2\mathcal B$ occurs only under
independence, when both sides are zero.

The finite-support result extends to arbitrary, not necessarily rational,
cell probabilities.  Keep
zero cells at zero and let
$\mathcal S:=\{(i,j):P(X=x_i,Y=y_j)>0\}$.
For a cell $s=(i,j)\in\mathcal S$, abbreviate its probability by
$p_s:=P(X=x_i,Y=y_j)$.  Choose $s_0\in\mathcal S$ and, for every sufficiently
large positive integer $N$, put
\[
p_s^{(N)}=\frac{\lfloor Np_s\rfloor}{N}
\quad(s\in\mathcal S\setminus\{s_0\}),
\qquad
p_{s_0}^{(N)}=1-\sum_{s\in\mathcal S\setminus\{s_0\}}p_s^{(N)}.
\]
Every originally positive cell, and hence every occupied marginal row
and column, remains positive for sufficiently large $N$.  These rational
tables converge to the original table.  Both sides of
\eqref{eq:finite-tau-BKR} are continuous polynomial functions of the
fixed table entries, so the inequality passes to the limit.

We now remove the finite-support assumption.  For each integer $k\ge1$, define
the nested finite partition
\[
\mathcal P_k:=\{(-\infty,-k]\}
\cup\{(j2^{-k},(j+1)2^{-k}]:j\in\mathbb Z,\ -k2^k\le j<k2^k\}
\cup\{(k,\infty)\}.
\]
Every endpoint of $\mathcal P_k$ is an endpoint of
$\mathcal P_{k+1}$.  Form the path tree for the $X$-coordinate from the cells
of $\mathcal P_k$
having positive $F_X$-mass, in their inherited order, and define $g_k(x)$
to be the ordinal rank of the occupied cell containing $x$; define it
arbitrarily on the union of zero-$F_X$-mass cells.  Define $h_k$ analogously
from the occupied $F_Y$-cells, and set $X_k=g_k(X)$, $Y_k=h_k(Y)$.  Almost
surely, equal entries remain equal under every quantizer, while any distinct
pair is eventually separated.  The weak-order pattern of each sampled
four-tuple---the collection of all pairwise relations $<$, $=$, and $>$ among
its four coordinates---therefore eventually stabilises.
The $\tau^*$ kernel is bounded, so dominated convergence gives
\begin{equation}\label{eq:tau-quantisation-limit}
\tau^*(X_k,Y_k)\longrightarrow\tau^*(X,Y).
\end{equation}

Let $r_k(x),s_k(y)$ denote the upper endpoints of the cells containing
$x,y$, with a top tail interpreted as $+\infty$.  These local endpoint maps
are unrelated to the coefficient families $r$ and $s$ in
Section~\ref{sec:tree-framework}.  Extend
$D$ by
\[
\bar D(+\infty,y)=\bar D(x,+\infty)=\bar D(+\infty,+\infty)=0,
\qquad
\bar D(x,y)=D(x,y)\quad(x,y\in\mathbb R).
\]
Because the cells are right closed, for $F_X\times F_Y$-almost every
$(x,y)$,
\[
F_{X_k,Y_k}(g_k(x),h_k(y))-F_{X_k}(g_k(x))F_{Y_k}(h_k(y))
=\bar D(r_k(x),s_k(y)).
\]
If $X'$ and $Y'$ are independent with laws $F_X$ and $F_Y$, then
\[
\mathcal B(X_k,Y_k)
=\mathbb E\!\left[\bar D(r_k(X'),s_k(Y'))^2\right].
\]
For finite $x,y$, eventually neither lies in a tail cell.  Refinement
makes the upper endpoints nonincreasing, and
$0\le r_k(x)-x<2^{-k}$ and $0\le s_k(y)-y<2^{-k}$,
so $r_k(x)\downarrow x$ and $s_k(y)\downarrow y$.  The rectangles
$(-\infty,r_k(x)]\times(-\infty,s_k(y)]$ decrease to
$(-\infty,x]\times(-\infty,y]$.  Continuity from above of the joint and
marginal measures, followed by dominated convergence, implies
\begin{equation}\label{eq:BKR-quantisation-limit}
\mathcal B(X_k,Y_k)\longrightarrow\mathcal B(X,Y).
\end{equation}
Applying \eqref{eq:finite-tau-BKR} to $(X_k,Y_k)$ and using
\eqref{eq:tau-quantisation-limit} and
\eqref{eq:BKR-quantisation-limit} proves
\eqref{eq:tau-star-BKR-lower-bound}.

It remains to verify that $\mathcal B$ detects every failure of
independence, including atomic and singular laws.  For all
$x,x',y,y'\in\mathbb R$,
\begin{equation}\label{eq:D-variation}
|D(x,y)-D(x',y')|
\le2|F_X(x)-F_X(x')|+2|F_Y(y)-F_Y(y')|.
\end{equation}
Suppose $D(x_0,y_0)=d\ne0$.  Necessarily
$F_X(x_0),F_Y(y_0)\in(0,1)$.  Choose
$0<\varepsilon<\min\{|d|/8,\allowbreak F_X(x_0),\allowbreak F_Y(y_0)\}$.
The Borel sets
\[
A:=\{x\le x_0:F_X(x_0)-\varepsilon<F_X(x)\le F_X(x_0)\},
\]
\[
C:=\{y\le y_0:F_Y(y_0)-\varepsilon<F_Y(y)\le F_Y(y_0)\}
\]
have positive $F_X$- and $F_Y$-measure.  Indeed, if
$U=F_X(X)$ and $t=F_X(x_0)-\varepsilon$, then $U$ need not be uniform when
$F_X$ has atoms, but it satisfies the super-uniform inequality
$P(U\le t)\le t$.  Hence
$P(X\in A)\ge\varepsilon$, and similarly $P(Y\in C)\ge\varepsilon$.
By \eqref{eq:D-variation}, $|D(x,y)|>|d|/2$ on $A\times C$, so
$\mathcal B(X,Y)\ge\frac{d^2}{4}P(X\in A)P(Y\in C)>0$.
Thus $\mathcal B=0$ forces $D(x,y)=0$ for all $x,y$.  Equality of the
joint and product measures on all lower-left rectangles implies equality
on the Borel sigma-field.  Conversely independence gives $D\equiv0$.
Therefore $\mathcal B=0$ if and only if $X$ and $Y$ are independent.
Dependence gives $\mathcal B>0$, and
\eqref{eq:tau-star-BKR-lower-bound} gives $\tau^*(X,Y)>0$.
If $X$ and $Y$ are independent, then the four-coordinate
$X$-array and $Y$-array in the definition of $\tau^*$ are independent.
Exchangeability gives zero expectation to each factor $a$, so
$\tau^*(X,Y)=0$.  Thus Corollary~\ref{cor:tau-star} is equivalent to the
zero characterisation stated in the title and introduction.
\end{proof}

\section{Discussion}

Corollary~\ref{cor:tau-star} is a population identification result.  It
extends the zero characterisation of $\tau^*$ to arbitrary real bivariate
laws.  No new sample-level limit theorem is proved here, and consequences for
test consistency under atoms or singular components are not analysed.  The
four-sample kernel and the distribution-function
discrepancy are bounded.  Together with right-closed nested quantisation,
continuity from above, and dominated convergence, this avoids moment,
density, and continuity assumptions and treats atomic and singular laws
directly.

The phylogenetic and statistical results use the same construction.  Quartet
covariance is first written as a sum of local nonnegative squares on general
labelled trees.  Path trees then encode the weak-order patterns of finite
bivariate distributions, and the coefficient lower bound converts those
squares into a distribution-function discrepancy.  This finite inequality is
the input to the approximation argument for arbitrary real bivariate laws.

\begin{acks}[Acknowledgments]
The majority of this work was carried out while both authors were affiliated
with the CAS-MPG Partner Institute for Computational Biology (PICB), Shanghai
Institutes for Biological Sciences, Chinese Academy of Sciences, Shanghai,
China.

The authors used OpenAI's GPT-5.6 Sol with the ultra reasoning-effort
setting to assist in completing the proofs of
Lemma~\ref{lem:coefficient-lower-bound} and Corollary~\ref{cor:tau-star}, and in
constructing the accompanying formalisation in Lean~4 and mathlib.  The authors
checked and revised all mathematical arguments and formal code and take full
responsibility for the contents of the manuscript and the formalisation.
\end{acks}

\begin{supplement}
\stitle{Lean 4 formalisation and reproducibility source code}
\sdescription{The accompanying supplementary ZIP file contains the complete
Lean 4 source closure for the machine-checked results cited in the manuscript,
together with pinned Lean and mathlib versions, a manuscript-to-Lean
correspondence table, an axiom audit, integrity hashes, and licence
information.}
\end{supplement}

\end{document}